\documentclass[11pt,letterpaper]{amsart}
\usepackage[T1]{fontenc}
\usepackage{lmodern}
\usepackage{amsmath,amssymb,amsthm,mathtools,microtype}
\usepackage{needspace}
\usepackage{xcolor}
\usepackage[unicode,breaklinks=true,colorlinks=true,linkcolor=blue,urlcolor=blue,citecolor=blue]{hyperref}
\numberwithin{equation}{section}
\newtheorem{Theorem}{Theorem}[section]
\newtheorem{Proposition}[Theorem]{Proposition}
\newtheorem{Lemma}[Theorem]{Lemma}
\newtheorem{Corollary}[Theorem]{Corollary}

\newcommand{\andand}{\quad \text{and} \quad}
\newcommand{\asas}{\quad \text{as~} }
\newcommand{\R}{\mathbb R}
\newcommand{\cH}{\mathcal{H}}

\def\indeq{\quad{}}
\def\llabel#1{\marginnote{\color{lightgray}\rm\small(#1)}[-0.0cm]\notag}
\usepackage{marginnote}
\def\comma{ {\rm ,\quad{}} }            
\def\colb{\color{black}}

\def\enew{\color{black}}

   \DeclareMathOperator{\supp}{supp}

\begin{document}
\title{Quantitative parabolic decay with bounded drift}
\author{Igor~Kukavica and Qi Xu}
\date{}
\begin{abstract}
A bounded time-dependent drift together with an arbitrarily small
real potential can produce double exponential decay of the full
spatial $L^2$ norm. We prove matching quantitative two-time lower bounds
for energy solutions of
$\partial_tu-\Delta u=vu+w\cdot\nabla u$ on $\R^n$, for $n\geq3$,
when $w\in L^\infty_{t,x}$ and either $v$ is sufficiently small in
$L^\infty_tL^{n/2}_x$ or $v\in L^\infty_tL^p_x$ for some
$n/2<p\leq\infty$, without a smallness assumption. In particular, a
nonzero global solution satisfies a double exponential lower bound
at large times. This decay scale is optimal for these coefficient
classes: In every dimension $n\geq3$, we construct a real bounded
drift, a real potential arbitrarily small in the critical space and
in any prescribed subcritical $L^p$ space, and a nonzero real energy
solution with double exponential decay. The construction localizes
Rowan's torus example by absorbing the localization error.
\end{abstract}
\maketitle
\section{Introduction}
\label{Sec01}
Advection can carry a scalar to fine spatial scales, where diffusion
acts more rapidly. How fast can this process diminish a nonzero solution
when the velocity remains uniformly bounded for all time? We study the
full spatial $L^2$ norm of solutions to
\[
 \partial_tu-\Delta u=v(t,x)u+w(t,x)\cdot\nabla u
 \quad\text{on }[0,\infty)\times\R^n
 ,
\]
where $n\geq3$.
We assume $w\in L^\infty_{t,x}$, without a divergence condition, 
allowing complex coefficients. The potential $v$
is either sufficiently small
in the scale-invariant space $L^\infty_tL^{n/2}_x$, or arbitrary in
$L^\infty_tL^p_x$ for $n/2<p\leq\infty$.
Then, solutions belong locally in time to $C_tL^2_x\cap L^2_tH^1_x$.

The influence of a flow on diffusion has been studied from several
angles. Constantin, Kiselev, Ryzhik, and Zlato\v{s}~\cite{CKRZ}
characterized incompressible flows that strongly enhance relaxation
when their amplitude increases. For bounded time-dependent flows on a
torus, Miles and Doering~\cite{MD}, using an argument of Poon~\cite{Poon}, obtained a double exponential $L^2$ lower bound.
Rowan then constructed in~\cite{Rowan} a bounded divergence-free flow on
$\mathbb T^2$ and a nonzero solution whose norm decays at this rate.
His example shows that the lower bound can be attained with a
uniformly bounded velocity; it is also the starting point of our
sharpness construction.

Our lower bounds hold on $\R^n$ for the potential classes above. In a
simple form, if $\|u(0)\|_{L^2_x}>0$,
then there are constants
$c,C>0$ such that
\[
 \|u(t)\|_{L^2_x}\geq c\exp\bigl(-C e^{C t}\bigr)
    \comma t\geq 1
 ;
\]
see Corollary~\ref{C01} below. Theorems~\ref{T01} and~\ref{T200} give
quantitative comparisons using the norms at two earlier times, with
constants uniform over solutions having those specified norms.
Two earlier norms are needed for such a comparison: For the free heat
equation, initial data of fixed $L^2$ norm concentrated at arbitrarily
high frequencies can decay arbitrarily fast over a fixed interval.
The estimates impose no spatial localization or frequency condition.

The time scale differs from the drift-free problem. When $w=0$,
the authors of the present paper obtained in~\cite{KX} the optimal
bound $ce^{-Ct}$ for small critical
potentials and $ce^{-Ct^2}$ for subcritical potentials without
smallness.
Bounded time-dependent advection permits the double
exponential scale. Other settings also reflect the role of geometry:
Huang and Xu~\cite{HX} proved exponential lower bounds along a
sequence of times for shear flows, while Nobili and Pottel~\cite{NP}
obtained algebraic lower bounds on $\R^d$ under additional decay
assumptions on the velocity.

Our double exponential bound is sharp in the stated coefficient
classes. Rowan's periodic solution is not in $L^2(\R^n)$
pointwise in time,
so we
localize it in the first two variables and add decaying factors in
the remaining dimensions. Most of the resulting zeroth-order error
can be added to a bounded drift, while the rest is supported near a
critical point of the transverse factor.
Theorem~\ref{T03} and
Corollary~\ref{C200} give, for every $n\geq3$, every $\epsilon>0$,
and any prescribed
$p\in[n/2,\infty]$
a real bounded drift and a real
potential satisfying
\[
 \|v\|_{L^\infty_tL^p_x}<\epsilon,
\]
with the drift bound independent of $\epsilon$, 
together with a
nonzero real energy solution satisfying
\[
 0<\|u(t)\|_{L^2_x}\leq C\exp(-c e^{ct})
    \comma t\geq0.
\]
The potential in this construction is generally nonzero, and the
full drift need not be divergence-free.
Thus the result establishes
sharpness
of Theorems~\ref{T01} and~\ref{T200}.

The analytic difficulty lies in making a formal frequency argument
valid for rough coefficients. The classical connection between decay
and uniqueness runs through Lax~\cite{Lax}, Lions and Malgrange~\cite{LM}, and the Hilbert-space method of Agmon and Nirenberg~\cite{AN}.
The method of Agmon and Nirenberg, lead to many results and applications, in particular to backward uniqueness; see \cite{BT,G,Z}. In particular, see \cite{K} for backward uniqueness results for parabolic equations with rough coefficients.
Also, we point out the alternative approach to backward uniqueness and time decay for equations with bounded coefficients by Lees and Protter; see~\cite{Lees,LP}.
For bounded coefficients, the Dirichlet quotient
$\|\nabla u(t)\|_2^2/\|u(t)\|_2^2$ grows at most exponentially
in the presence of a first-order term, yielding a double
exponential lower bound for the energy norm.
This elementary argument, however, does not directly extend
to the full range of potentials considered here, particularly
the small critical class $L^\infty_tL^{n/2}_x$.
The critical parabolic Carleman estimates
of Escauriaza and Vega~\cite{EV} and the Hermite-parametrix framework
of Koch and Tataru~\cite{KT} provide the appropriate background.

To absorb the drift on short intervals,
we use the mixed estimate from~\cite{KX} in similarity variables and
an integrated energy estimate.
Integration over the spatial centers removes the Gaussian weight and
yields an estimate for the full $L^2(\R^n)$ norm; iteration gives
the long-time bound. For $p>n/2$, a large-value/small-value
decomposition reduces the potential to a small critical part and a
bounded part. We prove the critical bound in Section~\ref{Sec02}, extend it to
subcritical potentials in Section~\ref{Sec03}, and give the sharpness
construction in Section~\ref{Sec04}.

\section{Lower bounds with bounded drift}
\label{Sec02}
Throughout the paper, let $n\ge3$ be fixed.
The following is our first main result
for critical potentials~$v$.

\begin{Theorem}[Bounded drift]\label{T01}
Let $I=[T_0,T_*]$ with $T_*\leq\infty$, and let
\[
u\in C(I;L^2(\R^n))\cap L^2_{\text{loc}}(I;H^1(\R^n))
\]
be a distributional solution of
\begin{equation}
 \partial_tu-\Delta u=v(t,x)u+w(t,x)\cdot\nabla u,
 \label{EQ01}
\end{equation}
where the functions $u,v,w$ may be complex-valued.
There exist positive
$\epsilon,\eta$, and $C\geq 1$ with the following properties.
Suppose
\begin{equation}
 \|v\|_{L^\infty(I;L^{n/2}(\R^n))}\leq\epsilon
 \andand M:=\|w\|_{L^\infty(I\times\R^n)}<\infty
 ,
 \label{EQ02}
\end{equation}
and for arbitrary finite $t_1<t_2<t_3$ in $I$, assume
$\|u(t_1)\|_{L^2_x},\|u(t_2)\|_{L^2_x}>0$.
Let \begin{equation}
 H=\min\{(t_2-t_1)/2,\eta/(2M^2)\}
\andand
 k=\left\lceil (t_3-t_2)/H\right\rceil,
 \label{EQ03}
\end{equation} when $M>0$, and otherwise $H=(t_2-t_1)/2$.
We have an explicit bound
\begin{align}
\begin{split}
 \|u(t_3)\|_{L^2_x}
 &\geq \|u(t_2)\|_{L^2_x}\\
 &\times\exp\left(-C^{\,k+1}
 \left(1+\log_+\frac{\|u(t_1)\|_{L^2_x}}{\|u(t_2)\|_{L^2_x}}
               +M^2(t_2-t_1)\right)\right)>0.
\end{split}\label{EQ04}
\end{align}
\end{Theorem}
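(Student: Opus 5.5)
We follow the strategy outlined in the introduction. We first prove a one-step estimate on short intervals of length $H$ and then iterate it $k$ times. The one-step estimate compares the norm at a time $s+H$ with the norms at the times $s-H$ and $s$; the third time $t_3$ enters only through the number of steps $k$. The basic tool is the mixed (Gaussian-weighted) estimate from~\cite{KX} for the drift-free problem with small critical potential. That estimate bounds $\|u(s+h)\|$, localized by a Gaussian centered at $x_0$, from below by the localized norms at earlier times, through a logarithmic frequency quotient. We treat $w\cdot\nabla u$ as a forcing term $f$ in that estimate. To make $f$ perturbative we need $\int_s^{s+H}\|w\cdot\nabla u\|_2^2\le M^2H\int\|\nabla u\|_2^2$. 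This quantity is controlled by the integrated energy estimate. Indeed, testing \eqref{EQ01} with $\bar u$ and using $\epsilon$-smallness of $v$ (via Sobolev, $|\int v|u|^2|\le C\epsilon\|\nabla u\|_2^2$) together with $|\int w\cdot\nabla u\,\bar u|\le \tfrac14\|\nabla u\|^2+M^2\|u\|^2$ gives
\[
 \|u(t)\|_2^2+\int_{a}^{t}\|\nabla u\|_2^2\le e^{CM^2(t-a)}\|u(a)\|_2^2.
\]
The restriction $H\le \eta/(2M^2)$ makes $M^2H\le\eta/2$ small, so the drift contribution is absorbed in the Carleman/frequency inequality with a loss of only an absolute factor.

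The next step removes the Gaussian weight. The~\cite{KX} estimate in similarity variables centered at $(s+H,x_0)$ produces an inequality of the form
\[
 \int |u(s+H)|^2 G_{x_0}\gtrsim \Bigl(\int|u(s)|^2 G_{x_0}\Bigr)^{1+\theta}\Bigl(\int|u(s-H)|^2\tilde G_{x_0}\Bigr)^{-\theta}\exp(-C),
\]
with a quotient controlled by the energy at the earlier time. We integrate this over $x_0\in\R^n$, since $\int G_{x_0}\,dx_0$ is constant. To handle the nonlinear (convex-type) dependence on the localized quantities we use H\"older or Jensen in $x_0$. This yields a global one-step bound for $N_j:=\log(\|u(s_{j-1})\|_2/\|u(s_j)\|_2)$ and $Q:=M^2(t_2-t_1)$:
\[
 \log\frac{\|u(s)\|}{\|u(s+H)\|}\le C\Bigl(1+\log_+\frac{\|u(s-H)\|}{\|u(s)\|}+M^2H\Bigr).
\]
We start the iteration with $s_0=t_2-H\ge t_1$ and $s_1=t_2$, using the energy inequality to relate $\|u(t_2-H)\|$ to $\|u(t_1)\|$. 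This costs $e^{CM^2(t_2-t_1)}$, which is the origin of the $M^2(t_2-t_1)$ term in \eqref{EQ04}. Note that $H\le (t_2-t_1)/2$ guarantees that all the times involved lie in $I$.

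For the iteration, set $s_j=t_2+jH$ and $A_j=1+\log_+(\|u(s_{j-1})\|/\|u(s_j)\|)+Q$. The one-step bound gives $A_{j+1}\le CA_j$, hence $A_j\le C^jA_1$. Summing $\log(\|u(s_{j-1})\|/\|u(s_j)\|)$ for $j=1,\dots,k$ gives a total of at most $\sum C^j A_1\le C^{k+1}A_1$ after enlarging $C$. A final partial step from $s_{k-1}$ to $t_3$ (at most $H$) is handled by the same one-step estimate with a shorter $h\le H$. Positivity of the norms along the way follows inductively from the one-step bound (backward uniqueness). The case $M=0$ is the same argument with $k$ determined by $H=(t_2-t_1)/2$.

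The main obstacle is the one-step estimate: the drift is not small in any scale-invariant sense, only $M^2H$ is small. We must also check that the~\cite{KX} mixed estimate tolerates an $L^2_tL^2_x$ forcing of size $\sqrt{M^2H}\,\|\nabla u\|_{L^2L^2}$ uniformly in the center $x_0$, and that the Gaussian-weighted energy inequality (with the weight's drift term) still closes. We would first try the weighted energy identity for $u\,G_{x_0}^{1/2}$ and bound the commutator terms by $M|\nabla G|$, absorbing them via $H\lesssim \eta/M^2$.
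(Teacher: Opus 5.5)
Your outer structure matches the paper: a three-time estimate on windows where $M^2(t_3-t_1)$ is small, the recursion $A_{j+1}\le CA_j$, the energy inequality from $t_1$ to $t_2-h$ (the source of the $M^2(t_2-t_1)$ term), and integration over Gaussian centers. The step that carries the proof, absorbing $w\cdot\nabla u$, is missing, and the mechanism you propose cannot supply it.

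You control the drift as a forcing through the unweighted bound $\int_a^t\|\nabla u\|_2^2\le e^{CM^2(t-a)}\|u(a)\|_2^2$, i.e.\ by the norm at an earlier time. In the Carleman inequality, however, the forcing enters as $\int e^{\phi(s(t))}\|w\cdot\nabla u\|\,dt$. The weight $e^{\phi}$ is largest near the final time, where it exceeds $e^{\phi(s(t_2))}$ by a factor growing with $\beta$. An earlier-time energy bound therefore gives a term that dominates the left side $e^{\phi(s(t_2))}\|u(t_2)\|_{L^2_x}$. No fixed smallness of $M^2H$ absorbs it, because $\beta$ must grow with $\log_+(\|u(t_1)\|_{L^2_x}/\|u(t_2)\|_{L^2_x})$. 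The drift has to be controlled by Carleman-weighted norms of $u$ itself, uniformly in $\beta,\kappa$.

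Since $X_\kappa(g)$ in \eqref{EQ08} contains no gradient, this needs the extra estimate of Lemma~\ref{L01}:
pair the conjugated equation \eqref{EQ09} with $g$, so that compact $s$-support removes the $\partial_s$ term;
use $\Psi'\le C\kappa$ on $[0,\log4]$ (this is why $\kappa\ge\beta$ is imposed) to get \eqref{EQ13}, namely $\|\cH^{1/2}g\|_{L^2_{s,y}}\lesssim\kappa^{1/2}\|g\|_{L^2_{s,y}}+(\|G\|_{L^1_sL^2_y}\|g\|_{L^\infty_sL^2_y})^{1/2}$;
then put $W\cdot(\nabla_y+y/4)g$ into the $\kappa^{-1/4}L^2_{s,y}$ slot.
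There $\kappa^{-1/4}\kappa^{1/2}$ is exactly the weight $\kappa^{1/4}$ in $X_\kappa$. The cost is $C\|W\|_{L^\infty}X_\kappa(g)$ plus a term absorbed by Young, with $\|W_z\|_{L^\infty}\le2M\sqrt{t_3-t_1}$ small. The Gaussian-weighted identity for $uG_{x_0}^{1/2}$ that you suggest omits the factor $e^{\phi}$. That factor produces the $\Psi'$ term and hence the matching power of $\kappa$.

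Two lesser points. First, you localize, optimize, and then integrate a nonlinear inequality in $x_0$. The H\"older step there goes in the right direction, but your localized inequality is not what the Carleman argument yields. The cutoff errors are time integrals of Gaussian-localized norms of varying width, and reducing them to single-time localized norms would need a localized energy inequality with a critical potential. The paper avoids this by integrating the \emph{linear} inequality \eqref{EQ19} over centers first (Minkowski and \eqref{EQ20}), obtaining \eqref{EQ21} for full $L^2$ norms. It then bounds the cutoff error on $[t_1,t_1+\delta]$ by \eqref{EQ07}, and the one past $t_3$ by continuing $u$ with the free heat flow; only then does it choose $\beta$ from global norms. Note that the window $\{0\le s(t)\le\log4\}$ extends to $3t_3-2t_1$, and the similarity variables are singular at $4t_3-3t_1$, not at the final time as in your centering at $(s+H,x_0)$.

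Second, use equal steps $h=(t_3-t_2)/k\le H$. A partial last step needs the one-step estimate with unequal gaps; Proposition~\ref{P01} does give it, since its constant depends only on $(t_3-t_1)/(t_2-t_1)\le2$.
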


Note that when $M=0$, \cite[Theorem~2.2]{KX} gives the sharper
exponential-in-time lower bound.
Above and in the sequel, we allow all constants to depend
on $n$ without mention.

Theorem~\ref{T01} follows by iterating the following estimate.

\begin{Proposition}[The short-time estimate for bounded drift]
\label{P01}
Under the hypotheses of Theorem~\ref{T01}, let
$t_1<t_2<t_3$ be finite times in~$I$. If
\[
 M^2(t_3-t_1)\leq\eta
 \andand \|u(t_1)\|_{L^2_x}\|u(t_2)\|_{L^2_x}>0,
\]
then
\begin{equation}
 \|u(t_3)\|_{L^2_x}\geq\|u(t_2)\|_{L^2_x}
 \exp\left(-C\frac{t_3-t_1}{t_2-t_1}
 \left(1+\log_+\frac{\|u(t_1)\|_{L^2_x}}{\|u(t_2)\|_{L^2_x}}\right)
 \right)>0.
 \label{EQ05}
\end{equation}
\end{Proposition}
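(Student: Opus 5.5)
We plan to prove Proposition~\ref{P01} by a frequency (Dirichlet-quotient) argument made rigorous through the Gaussian-weighted mixed estimate of \cite{KX}, applied in similarity variables centered at arbitrary spatial points. The drift term is treated as a perturbation, which is admissible on the short interval where $M^2(t_3-t_1)\leq\eta$. We normalize by translation in time so that $t_1=0$, and by the linearity of \eqref{EQ01} so that $\|u(t_2)\|_{L^2_x}=1$. Set $\Lambda=1+\log_+\|u(t_1)\|_{L^2_x}$. The goal is then to show
\[
\|u(t_3)\|_{L^2_x}\geq \exp\bigl(-C(t_3/t_2)\Lambda\bigr).
\]

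\emph{Step 1 (frequency control at an intermediate time).} We first use the energy estimate on $[t_1,t_2]$. Multiply \eqref{EQ01} by $\bar u$ and integrate. Absorb $|w\cdot\nabla u\,\bar u|\leq \tfrac14|\nabla u|^2+M^2|u|^2$, and absorb the critical potential term using Sobolev together with $\epsilon$ small. Since $M^2 t_2\leq \eta$, this yields
\[
\int_{0}^{t_2}\|\nabla u\|_2^2\,dt\lesssim \|u(t_1)\|_2^2 .
\]
Combined with the monotonicity $\|u(t)\|_2\gtrsim \|u(t_2)\|_2$ for $t\le t_2$, which holds up to the factor $e^{C\eta}$, the pigeonhole principle gives a time $s\in[t_2/2,t_2]$ with
\[
\|\nabla u(s)\|_2^2\leq C t_2^{-1}\|u(t_1)\|_2^2\,\|u(s)\|_2^2/\|u(t_2)\|_2^2 .
\]
Since the bound $\|u(t_1)\|_2^2$ is at most $e^{2\Lambda}$, this estimate by itself has a bad dependence on $\Lambda$. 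The fix is to run the argument with a logarithmic version: we apply the mixed estimate of \cite{KX} directly to Gaussian-localized pieces $u(s,\cdot)e^{-|x-x_0|^2/(8(t-s))}$.

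\emph{Step 2 (local Carleman/log-convexity estimate around each center).} For each $x_0\in\R^n$ we work in similarity variables $y=(x-x_0)/\sqrt{\tau}$ with $\tau=t_3+\delta-t$. In these variables the drift becomes $\sqrt{\tau}\,w\cdot\nabla_y$, whose size is $\lesssim M\sqrt{t_3}\le\sqrt\eta$. This smallness is what lets the term be absorbed into the Hermite/Ornstein--Uhlenbeck frequency monotonicity exactly as the small critical potential is absorbed in \cite{KX}. We then invoke the mixed estimate of \cite{KX}, with the integrated energy estimate handling the extra first-order term. The outcome is a three-ball-type inequality: the Gaussian-weighted norm at $t_3$ centered at $x_0$ is bounded below by the weighted norm at $t_2$ raised to a power and divided by the weighted norm at $t_1$. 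The exponent is of order $e^{C(t_3-t_1)/(t_2-t_1)}$, or linear in the ratio after taking logs.

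\emph{Step 3 (removing the weights).} We integrate the local inequalities in $x_0$ over $\R^n$. Integrating a Gaussian of width $\sqrt{\tau}$ in $x_0$ reproduces $\tau^{n/2}\|u\|_{L^2}^2$. Applying H\"older in $x_0$ to the product structure then converts the local bound into \eqref{EQ05}.

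We expect the main obstacle to be Step 2. In the weighted estimate, the drift produces a term $\int \tau\, |w|^2|u|^2 G$ that must be dominated by the frequency quantity rather than merely by the $L^2$ mass. The plan is to exploit $\tau\leq t_3-t_1$ and $M^2(t_3-t_1)\leq\eta$, so that this term is at most $\eta$ times the weighted mass and can be absorbed into the growth rate of the log of the weighted norm. The delicate point is to verify that the absorbed constant does not spoil the linear dependence on $(t_3-t_1)/(t_2-t_1)$ in the exponent.
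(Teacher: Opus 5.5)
Your skeleton matches the paper's: similarity variables centered at every point, a rescaled drift of size $\lesssim M\sqrt{t_3-t_1}\leq\sqrt\eta$, the mixed estimate \eqref{EQ08} plus an integrated energy identity, and integration over centers. However, Step~2, which carries the whole proof, is left as a black box, and the resolution you sketch for the obstacle you identify would not close it. The drift is not absorbed ``exactly as the critical potential.'' The term $Vg$ is absorbed through the $L^2_sL^q_y$ component of $X_\kappa(g)$ by H\"older and Sobolev. By contrast, $W\cdot(\nabla_y+y/4)g$ is first order, and $X_\kappa(g)$ contains no derivative. The only admissible slot in \eqref{EQ08} is the $L^2_{s,y}$ source, which costs $C\|W\|_{L^\infty_{s,y}}\kappa^{-1/4}\|\cH^{1/2}g\|_{L^2_{s,y}}$. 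The integrated energy identity bounds $\|\cH^{1/2}g\|_{L^2_{s,y}}^2$ by $\sup_{\supp g}\Psi'\,\|g\|_{L^2_{s,y}}^2$ plus source terms, where $\Psi'=\beta+\kappa(s+2)+n/4$. So the drift contributes about $\|W\|_{L^\infty_{s,y}}(\beta/\kappa+1)^{1/2}\kappa^{1/4}\|g\|_{L^2_{s,y}}$. This can be absorbed into $\kappa^{1/4}\|g\|_{L^2_{s,y}}\leq X_\kappa(g)$ once $\kappa$ grows linearly with $\beta$, which is the essential choice $\kappa\geq\beta$ in Lemma~\ref{L01} (see \eqref{EQ12}--\eqref{EQ14}). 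What you propose instead is to bound $\int\tau|w|^2|u|^2G$ by $\eta$ times the weighted mass and feed it into the growth rate of the logarithm of the weighted norm. That is a frequency/log-convexity step: it handles the energy inequality but does not close a Carleman inequality, and it is the elementary argument that does not extend directly to $v\in L^{n/2}$. With $\kappa=\max\{\kappa_0,\beta\}$, the linear dependence on $(t_3-t_1)/(t_2-t_1)$ that worries you is automatic. Indeed, $\phi(\log4)-\phi(s(t_2))\leq C(\beta+\kappa)$, while $\beta$ is $\beta_0$ plus a logarithm divided by the weight gap $s(t_2)-s(t_1+\delta)\gtrsim(t_2-t_1)/(t_3-t_1)$. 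Relatedly, the power in your three-point inequality must be $O((t_3-t_1)/(t_2-t_1))$. A power of order $e^{C(t_3-t_1)/(t_2-t_1)}$ would only give a bound like $\exp(-e^{C(t_3-t_1)/(t_2-t_1)}\Lambda)$, which is weaker than \eqref{EQ05}.

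The second missing ingredient is time localization. The estimate \eqref{EQ08} requires compact $s$-support in $[0,\log4]$, so you must apply it to $\chi(t)u$, whose source is $\chi'u$. The upper transition of $\chi$ must then be controlled by $\|u(t_3)\|_{L^2_x}$, and forward energy bounds give this only for times after $t_3$, where $u$ need not exist (one may have $t_3=T_*$). The paper handles this in two ways. It places the singular time at $T_1=4t_3-3t_1$, so that $s(t_3)=\log(4/3)$ is interior to the window. It also continues $u$ past $t_3$ by the free heat flow with $v=w=0$, whose norm is nonincreasing. With your $\tau=t_3+\delta-t$, the time $t_3$ falls outside the window unless $\delta\geq(t_3-t_1)/3$, and in any case you still need $u$ beyond $t_3$. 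Finally, your order of operations differs from the paper's. The paper integrates the linear bound \eqref{EQ19} in $L^2_z$, using Minkowski and \eqref{EQ20}, to obtain \eqref{EQ21} for full $L^2_x$ norms, and only then absorbs the $t_1$ term with one global $\beta$. Your route, a local three-point inequality followed by H\"older in $x_0$, could also work, since $\int a^{1-\theta}b^{\theta}\leq(\int a)^{1-\theta}(\int b)^{\theta}$. But proving that inequality center by center needs extra Gaussian-weighted energy bounds on the transition intervals. The global route avoids this by applying \eqref{EQ07} and heat-flow monotonicity directly to $\|u(t)\|_{L^2_x}$.
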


\subsection{The energy estimate and the imported mixed estimate}

Hölder's and homogeneous Sobolev inequalities give
\begin{equation}
 |\langle vu,u\rangle_{L^2_x}|\leq C\epsilon\|\nabla u\|_{L^2_x}^2
 \andand
 |\langle w\cdot\nabla u,u\rangle_{L^2_x}|
 \leq M\|\nabla u\|_{L^2_x}\|u(t)\|_{L^2_x}.
 \label{EQ06}
\end{equation}
If
$C\epsilon\leq1/4$, taking the real part of the equation and using
Young's inequality shows
\begin{align}
 \frac12\frac{\text{d}}{\text{d}t}\|u(t)\|_{L^2_x}^2
 +\frac12\|\nabla u(t)\|_{L^2_x}^2
 &\leq CM^2\|u(t)\|_{L^2_x}^2.
   \label{EQ32}
\end{align}
To justify this, note that
$vu\in L^2_{\text{loc}}(I;H^{-1})$ and
$w\cdot\nabla u\in L^2_{\text{loc}}(I;L^2)$.
From \eqref{EQ32}, we obtain
\begin{align}
\label{EQ07}
\|u(t)\|_{L^2_x}
 &\leq e^{CM^2(t-s)}\|u(s)\|_{L^2_x}
     \comma s\leq t.
\end{align}
The inequality also holds for an energy solution continued by the free heat
flow after the terminal time.

Set $\cH=-\Delta_y+|y|^2/16$, $q=2n/(n-2)$, and $q'=2n/(n+2)$.
The following is the special case of \cite[Proposition 3.2]{KX}
that we need. Let
$I_{\text{in}}=(-1/2,\log4+1/2)$ and
$I_{\text{out}}=(-1,\log4+1)$;
there are $\kappa_0,\beta_0,C_0$ such that if
$\kappa\geq\kappa_0$, $\beta\geq\beta_0$, and
$\Psi(s)=\beta s+\tfrac\kappa2(s+2)^2+\tfrac n4s$, then
\begin{equation}
 \begin{split}
 X_\kappa(g)&:=\|g\|_{L^\infty_sL^2_y}
    +\|g\|_{L^2_sL^q_y}+\kappa^{1/4}\|g\|_{L^2_{s,y}}\\
 &\leq C
 (
   \|f_1\|_{L^1_sL^2_y}
  +\|f_2\|_{L^2_sL^{q'}_y}
  +\kappa^{-1/4}\|f_\kappa\|_{L^2_{s,y}}
 )
 \end{split}
 \label{EQ08}
\end{equation}
whenever $g\in L^2$ has compact $s$-support in
$I_{\text{in}}$ and
\[
(\partial_s+\cH-\Psi')g=f_1+f_2+f_\kappa
\] in the sense of distributions,
with the constant in \eqref{EQ08} independent of~$\beta,\kappa$. Indeed,
$\Psi'(s)=\beta+\kappa(s+2)+n/4$, which satisfies the slope
condition of that proposition on $I_{\text{out}}=(-1,\log4+1)$
when $\beta_0,\kappa_0$ are fixed sufficiently large.
We will impose the additional and essential choice $\kappa\geq\beta$.

\subsection{Absorption of the bounded first-order coefficient}
The next lemma extends \cite[Proposition~3.2]{KX} to the case of a bounded drift, relying on the energy estimate and inequalities established there for the Hermite operator.
\enew
\begin{Lemma}[Mixed Carleman estimate with bounded drift]\label{L01}
There exist $\epsilon,\delta>0$ with the following property.
Assume that 
\begin{equation}
 (\partial_s+\cH-\Psi')g
 =Vg+W(s,y)\cdot(\nabla_y+y/4)g+G,
 \label{EQ09}
\end{equation}
where
\[
g\in C_c(\R;L^2(\R^n))\cap L^2(\R;D(\cH^{1/2}))\andand \supp_s g\subset[0,\log4],
\]
with  $G\in L^1_sL^2_y$,
$\|V\|_{L^\infty_s L^{n/2}_y}\leq\epsilon$,
and
$\|W\|_{L^\infty_{s,y}}\leq\delta$.
Then
\begin{equation}
 X_\kappa(g)\leq C\|G\|_{L^1_sL^2_y}
 \label{EQ10}
\end{equation}
for 
$\kappa\geq\max\{\kappa_0,\beta\}$
and $\beta\geq\beta_0$.
\end{Lemma}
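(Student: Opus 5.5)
We apply the imported estimate \eqref{EQ08} to $g$, with $f_1=G$, $f_2=Vg$, and $f_\kappa=W\cdot(\nabla_y+y/4)g$. We then absorb the last two terms into the left-hand side. The potential term is handled as in \cite{KX}. Hölder's inequality gives
\[
\|Vg\|_{L^2_sL^{q'}_y}\leq\|V\|_{L^\infty_sL^{n/2}_y}\|g\|_{L^2_sL^q_y}\leq\epsilon X_\kappa(g),
\]
so this term contributes $C\epsilon X_\kappa(g)$. It is absorbed once $C\epsilon\leq1/4$.

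The drift term is the new feature, and it is the main obstacle. The estimate \eqref{EQ08} only gives the weight $\kappa^{-1/4}$ in front of $\|f_\kappa\|_{L^2_{s,y}}$, whereas $X_\kappa(g)$ does not control $\|(\nabla_y+y/4)g\|_{L^2}$ directly. We need a separate bound for the $\cH^{1/2}$-type quantity $\|(\nabla_y+y/4)g\|_{L^2_{s,y}}$ in terms of $X_\kappa(g)$ and $G$. Note that $\|(\nabla_y+y/4)g\|_2^2\lesssim\langle\cH g,g\rangle+\|g\|_2^2$, since $\nabla_y+y/4$ is a creation/annihilation-type combination adapted to $\cH=-\Delta_y+|y|^2/16$. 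Up to the constant shift coming from the commutator, $\|(\nabla_y+y/4)g\|^2=\langle\cH g,g\rangle - n/4\|g\|^2$ or similar.

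To obtain this bound, we pair \eqref{EQ09} with $g$ in $L^2_y$, take real parts, and integrate over $s\in[0,\log4]$. This is justified since $g\in C_c L^2\cap L^2D(\cH^{1/2})$, and the boundary terms vanish by the compact support. This yields
\[
\int\langle\cH g,g\rangle\,ds=\int\Psi'\|g\|_2^2\,ds+\mathrm{Re}\int\langle Vg+W\cdot(\nabla_y+y/4)g+G,g\rangle\,ds .
\]
We bound the right-hand side as follows. Since $\Psi'\leq C(\beta+\kappa)\leq C\kappa$ on the support, using $\kappa\geq\beta$, the first term is at most $C\kappa\|g\|_{L^2_{s,y}}^2$. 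The potential term is at most $\epsilon\|g\|_{L^2_sL^q_y}^2$. By Young's inequality, the drift term is at most $\tfrac12\|(\nabla_y+y/4)g\|^2_{L^2}+C\delta^2\|g\|_{L^2}^2$. The source term is at most $\|G\|_{L^1_sL^2_y}\|g\|_{L^\infty_sL^2_y}$. Altogether,
\[
\|(\nabla_y+y/4)g\|_{L^2_{s,y}}^2\leq C\kappa\|g\|_{L^2_{s,y}}^2+C\epsilon\|g\|_{L^2_sL^q_y}^2+C\|G\|_{L^1_sL^2_y}X_\kappa(g).
\]
Since $\kappa\|g\|^2_{L^2}\leq\kappa^{1/2}X_\kappa(g)^2$, we conclude
\[
\kappa^{-1/4}\|W\cdot(\nabla_y+y/4)g\|_{L^2}\leq\delta\kappa^{-1/4}\bigl(C\kappa^{1/4}X_\kappa(g)+C\|G\|^{1/2}X_\kappa(g)^{1/2}\bigr)\leq C\delta X_\kappa(g)+C\delta\|G\|_{L^1_sL^2_y}.
\]
This is exactly where the choice $\kappa\geq\beta$ is essential: it keeps $\Psi'\lesssim\kappa$, so the $\kappa^{1/4}$ gain in $X_\kappa$ precisely compensates the $\kappa^{-1/4}$ weight.

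Inserting these bounds into \eqref{EQ08} gives
\[
X_\kappa(g)\leq C\|G\|_{L^1_sL^2_y}+C(\epsilon+\delta)X_\kappa(g).
\]
Choosing $\epsilon$ and $\delta$ small, depending only on $n$ and $C_0$ and not on $\beta,\kappa$, we absorb the last term, which yields \eqref{EQ10}. Absorption is legitimate because $X_\kappa(g)<\infty$ under the stated regularity of $g$. The only point requiring care beyond this is checking that the constant relating $\|(\nabla_y+y/4)g\|^2$ to $\langle\cH g,g\rangle$ is harmless. If it carries a positive multiple of $\|g\|^2$, that term is dominated by $\kappa\|g\|^2$ since $\kappa\geq\kappa_0\geq1$.
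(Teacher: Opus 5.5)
Your proposal is correct and follows the paper's proof essentially step by step. You apply \eqref{EQ08} with $f_1=G$, $f_2=Vg$, $f_\kappa=W\cdot(\nabla_y+y/4)g$, control $\|(\nabla_y+y/4)g\|_{L^2_{s,y}}$ through the integrated energy identity using $\Psi'\le C\kappa$ (which is where $\kappa\ge\beta$ enters), and then absorb the resulting terms. The differences are cosmetic: you use the exact identity $\|(\nabla_y+y/4)g\|_{L^2_y}^2=\langle\cH g,g\rangle-\frac n4\|g\|_{L^2_y}^2$ instead of the bound $\le 2\|\cH^{1/2}g\|_{L^2_y}$, and you bound the potential term in the energy identity by $\epsilon X_\kappa(g)^2$ rather than absorbing it via Sobolev into $\|\cH^{1/2}g\|_{L^2_{s,y}}^2$ as the paper does.
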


\begin{proof}[Proof of Lemma~\ref{L01}]
Sobolev and Hölder inequalities give
\\
$\|Vg\|_{L^2L^{q'}}\leq C\epsilon\|g\|_{L^2L^q}$.
Since $\|(\nabla+y/4)g\|_{L^2}\leq
2\|\cH^{1/2}g\|_{L^2_{s,y}}$, the mixed estimate
\eqref{EQ08} gives
\begin{equation}
 X_\kappa(g)\leq C\|G\|_{L^1_sL^2_y}
 +C\epsilon X_\kappa(g)
 +2C\|W\|_{L^\infty_{s,y}}\kappa^{-1/4}
          \|\cH^{1/2}g\|_{L^2_{s,y}}.
 \label{EQ11}
\end{equation}
To control the derivative term in \eqref{EQ11}, we take the real part of the $L^2_y$ pairing of~\eqref{EQ09}
with $g$, and integrate in~$s$. Since $g$ has compact $s$-support,
\[
\operatorname{Re}\int
\langle \partial_s g,g\rangle_{L^2_y}\,\text{d}s
=
\frac12\int 
\frac{\text{d}}{\text{d}s}\|g(s)\|_{L^2_y}^2\,\text{d}s
=0.
\]
Thus,
\[
\begin{split}
\|\cH^{1/2}g\|_{L^2_{s,y}}^2
\leq &
\int \Psi'(s)\|g(s)\|_{L^2_y}^2\,\text{d}s
+\int |V||g|^2\,\text{d}y\,\text{d}s \\
&+
\left|
\int W\cdot(\nabla_y+y/4)g\,\overline g\,\text{d}y\,\text{d}s
\right|
+
\left|
\int \langle G,g\rangle_{L^2_y}\,\text{d}s
\right|.
\end{split}
\]
On the support of $g$,
\[
\Psi'(s)=\beta+\kappa(s+2)+\frac n4\leq C\kappa,
\]
since $\kappa\geq\max\{\beta,\kappa_0\}$ and $\kappa_0$ is chosen
sufficiently large.
Also, by H\"older and Sobolev inequalities, we obtain
\[
\int |V||g|^2\,\text{d}y\,\text{d}s
\leq
C\epsilon\|\nabla_y g\|_{L^2_{s,y}}^2
\leq
C\epsilon\|\cH^{1/2}g\|_{L^2_{s,y}}^2.
\]
Using
\[
\|(\nabla_y+y/4)g\|_{L^2_y}
\leq
2\|\cH^{1/2}g\|_{L^2_y},
\]
we further have
\[
\left|
\int W\cdot(\nabla_y+y/4)g\,\overline g\,\text{d}y\,\text{d}s
\right|
\leq
2\|W\|_{L^\infty_{s,y}}
\|\cH^{1/2}g\|_{L^2_{s,y}}
\|g\|_{L^2_{s,y}},
\]
while
\[
\left|
\int \langle G,g\rangle_{L^2_y}\,ds
\right|
\leq
\|G\|_{L^1_sL^2_y}
\|g\|_{L^\infty_sL^2_y}.
\]
Consequently,
\begin{align}
\begin{split}
\|\cH^{1/2}g\|_{L^2_{s,y}}^2
\leq &
C\kappa\|g\|_{L^2_{s,y}}^2
+
C\epsilon\|\cH^{1/2}g\|_{L^2_{s,y}}^2 \\
&+
2\|W\|_{L^\infty_{s,y}}
\|\cH^{1/2}g\|_{L^2_{s,y}}
\|g\|_{L^2_{s,y}} 
+
\|G\|_{L^1_sL^2_y}
\|g\|_{L^\infty_sL^2_y}.
\end{split}
\label{EQ12}
\end{align}
If $C\epsilon\leq 1/4$, $\|W\|_{L^\infty_{s,y}}\leq 1$,
and $\kappa\geq1$, Young's inequality gives
\begin{align}
\label{EQ13}
\|\cH^{1/2}g\|_{L^2_{s,y}}
\leq
C\left(
\kappa^{1/2}\|g\|_{L^2_{s,y}}
+
\|G\|_{L^1_sL^2_y}^{1/2}
\|g\|_{L^\infty_sL^2_y}^{1/2}
\right).
\end{align}
This supplies the derivative control missing from the mixed estimate and
allows the bounded first-order term to be absorbed in~\eqref{EQ08}.
Inserting \eqref{EQ13} into \eqref{EQ11}, and using the definition
of $X_\kappa(g)$ and $\kappa\geq1$, we obtain
\begin{align}
\begin{split}
 X_\kappa(g)&\leq C\|G\|_{L^1_sL^2_y}
 +C(\epsilon+\|W\|_{L^\infty_{s,y}})X_\kappa(g)
 \\&\indeq
 +C\|W\|_{L^\infty_{s,y}}\kappa^{-1/4}
      \left(\|G\|_{L^1_sL^2_y}X_\kappa(g)\right)^{1/2}.
\end{split}\label{EQ14}
\end{align}
Choose $\epsilon$ and $\delta$ so that
$C(\epsilon+\delta)\leq 1/4$ and apply Young's inequality to absorb
the final term.
This proves~\eqref{EQ10}.

For energy-class data, the pairing leading to \eqref{EQ12} is justified by
time mollification and the Hilbert triple
$D(\cH^{1/2})\subset L^2\subset D(\cH^{1/2})^*$.
Indeed, $Vg\in L^2_sD(\cH^{1/2})^*$, the drift belongs to
$L^2_sL^2_y$, and $G\in L^1_sL^2_y$.
\end{proof}

\subsection{The short-time estimate and iteration}
A space-time rescaling allows the drift term to be absorbed on sufficiently short time intervals, leading to the following argument.
\enew

\begin{proof}[Proof of Proposition~\ref{P01}]
Fix $t_1<t_2<t_3$. Set $T_1=4t_3-3t_1$ and
$s_1=4(t_3-t_1)$. For $t<T_1$, define
\begin{equation}
 s(t)=\log\frac{s_1}{T_1-t}\comma
 t(s)=T_1-s_1e^{-s},\andand \rho(s)=s_1e^{-s}.
 \label{EQ15}
\end{equation}
Let $F\in C_c(J;L^2_x)\cap L^2(J;H^1_x)$ be supported
in $\{0\leq s(t)\leq\log4\}$, where $J$ is an open interval
with $\sup J<T_1$, and suppose
$(\partial_t-\Delta)F=vF+w\cdot\nabla F+G$,
$G\in L^1_tL^2_x$. For every $z\in\R^n$ set
\begin{align*}
 \begin{split}
 &\phi(s)=\beta s+\tfrac\kappa2(s+2)^2,
 \\&
 g_z(s,y)=e^{\phi(s)-|y|^2/8}
          F\left(t(s),z+\sqrt{\rho(s)}y\right),\\
 &V_z(s,y)=\rho(s)v\left(t(s),z+\sqrt{\rho(s)}y\right),\\
 &W_z(s,y)=\sqrt{\rho(s)}
     w\left(t(s),z+\sqrt{\rho(s)}y\right),
 \\&
 G_z(s,y)=\rho(s)e^{\phi(s)-|y|^2/8}
          G\left(t(s),z+\sqrt{\rho(s)}y\right).
 \end{split}
 \label{EQ16}
\end{align*}
The elementary Gaussian-conjugation calculation yields
\begin{equation}
 (\partial_s+\cH-\Psi')g_z
  =V_zg_z+W_z\cdot(\nabla_y+y/4)g_z+G_z,
 \label{EQ17}
\end{equation}
where $\Psi=\phi+ns/4$.
Since $t'(s)=\rho(s)$ and $\rho'(s)=-\rho(s)$, differentiating
the definition of $g_z$ gives
\begin{align}
  \begin{split}
\rho e^{\phi-|y|^2/8}\partial_tF
&=\partial_sg_z-\phi'g_z
  +\frac12y\cdot\nabla_yg_z+\frac{|y|^2}{8}g_z,\\
\rho e^{\phi-|y|^2/8}\Delta_xF
&=\Delta_yg_z+\frac12y\cdot\nabla_yg_z
  +\left(\frac n4+\frac{|y|^2}{16}\right)g_z,\\
\sqrt{\rho}\,e^{\phi-|y|^2/8}\nabla_xF
&=(\nabla_y+y/4)g_z,
  \end{split}
   \label{EQ30}
   \end{align}
where the derivatives of $F$ are evaluated at
$(t(s),z+\sqrt{\rho(s)}\,y)$.
Subtracting the first two equations in \eqref{EQ30} and using
$(\partial_t-\Delta_x)F=vF+w\cdot\nabla_xF+G$, we obtain
\[
\begin{split}
\left(\partial_s-\Delta_y+\frac{|y|^2}{16}
-\phi'-\frac n4\right)g_z
&=V_zg_z+W_z\cdot(\nabla_y+y/4)g_z+G_z.
\end{split}
\]
Thus, with $\cH=-\Delta_y+|y|^2/16$ and $\Psi=\phi+ns/4$,
\[
(\partial_s+\cH-\Psi')g_z
=V_zg_z+W_z\cdot(\nabla_y+y/4)g_z+G_z.
\]
For every $s,z$, scaling gives
\begin{equation}
 \begin{split}
  & \|V_z(s)\|_{L^{n/2}_y}
 =\|v(t(s))\|_{L^{n/2}_x}\leq\epsilon,
 \\&
 \|W_z\|_{L^\infty(\operatorname{supp}_s g_z\times\R^n)}
 \leq2M\sqrt{t_3-t_1}.
 \end{split}
 \label{EQ18}
\end{equation}
The assumptions on $F$ imply that $g_z$ satisfies
$g_z\in C_sL^2_y\cap L_s^2D(\mathcal{H}^{1/2})$
and $\supp_s
g\subset[0,\log4]$, as needed in the statement of
Lemma~\ref{L01}. In particular, if
$M^2(t_3-t_1)\leq\eta:=\delta^2/4$, then
Lemma~\ref{L01} applies for every $z$ with constants
independent of $z,t_1,t_2,t_3,\beta,\kappa$.
Because $s\mapsto g_z(s)$ is continuous into $L^2_y$, its
$L^\infty_sL^2_y$ bound applies at every prescribed time. Hence,
\begin{equation}
 \|g_z(s_*)\|_{L^2_y}
 \leq C\int\|G_z(s)\|_{L^2_y}\,\text ds.
 \label{EQ19}
\end{equation}
Taking the $L^2$ norm in $z$ of~\eqref{EQ19} and then applying Minkowski's
 inequality gives
\begin{align*}
 \left(\int_{\R^n}\|g_z(s_*)\|_{L^2_y}^2\,\text{d}z\right)^{1/2}
 &\leq C\left(\int_{\R^n}
       \left(\int\|G_z(s)\|_{L^2_y}\,\text{d}s\right)^2\text{d}z\right)^{1/2}\\
 &\leq C\int
       \left(\int_{\R^n}\|G_z(s)\|_{L^2_y}^2\,\text{d}z\right)^{1/2}\text{d}s.
\end{align*}
For every $Q\in L^2(\R^n)$ and $\rho>0$, Tonelli's theorem and the
translation $x=z+\sqrt\rho\,y$ leads to
\begin{align}
\label{EQ20}
\begin{split}
  &
  \int_{\R^n}\int_{\R^n}
   e^{-|y|^2/4}|Q(z+\sqrt\rho\,y)|^2\,\text{d}y\,\text{d}z
  \\&\indeq
  =\int_{\R^n}e^{-|y|^2/4}\,\text{d}y\int_{\R^n}|Q(x)|^2\,\text{d}x
  =(4\pi)^{n/2}\|Q\|_{L^2_x}^2.
\end{split}
\end{align}
Using~\eqref{EQ20} with $Q=F(t(s_*))$ and 
$Q=G(t(s))$, we obtain
\begin{align*}
 \left(\int_{\R^n}\|g_z(s_*)\|_{L^2_y}^2\,\text{d}z\right)^{1/2}
   &=(4\pi)^{n/4}e^{\phi(s_*)}\|F(t(s_*))\|_{L^2_x},\\
 \left(\int_{\R^n}\|G_z(s)\|_{L^2_y}^2\,\text{d}z\right)^{1/2}
   &=(4\pi)^{n/4}\rho(s)e^{\phi(s)}\|G(t(s))\|_{L^2_x}.
\end{align*}
Canceling $(4\pi)^{n/4}$ and using $\text{d}t=\rho(s)\,\text{d}s$ yields
\begin{align}
\label{EQ21}
 e^{\phi(s(t_*))}\|F(t_*)\|_{L^2_x}
 &\leq C\int_J e^{\phi(s(t))}\|G(t)\|_{L^2_x}\,\text{d}t
 .
\end{align}
For each $Q\in L^2_x$
and $\rho>0$, Tonelli and translation invariance show
\begin{equation}
 \int_{\R^n}\int_{\R^n}e^{-|y|^2/4}
    |Q(z+\sqrt\rho\,y)|^2\,\text{d}y\,\text{d}z
  =(4\pi)^{n/2}\|Q\|_{L^2_x}^2.
 \label{EQ20_old}
\end{equation}
Using \eqref{EQ20_old} for $F$ and $G$, and
$\,\text{d}t=\rho(s)\,\text{d}s$, we get the physical-space estimate
\begin{equation}
 e^{\phi(s(t_*))}\|F(t_*)\|_{L^2_x}
 \leq C\int_J e^{\phi(s(t))}\|G(t)\|_{L^2_x}\,\text{d}t,
 \label{EQ21_old}
\end{equation}
\colb
where $J$ is a time interval containing $[t_1, 3t_3-2t_1]$ and
$\sup J<T_1$.
If necessary, continue $u$ after $t_3$ by
$u(t)=e^{(t-t_3)\Delta}u(t_3)$, and set $v=w=0$ there.
The two one-sided $L^2$ traces agree at $t_3$, so this extension
creates no distribution supported at that time. It belongs to the
local energy class and satisfies
$\|u(t)\|_{L^2_x}\leq\|u(t_3)\|_{L^2_x}$ for $t\geq t_3$.
Set $\delta=(t_2-t_1)/8$ and choose $0\leq\chi\leq1$ smooth,
increasing from zero to one on $[t_1,t_1+\delta]$, equal to one
on $[t_1+\delta,2t_3-t_1]$, and decreasing to zero on
$[2t_3-t_1,3t_3-2t_1]$.
For
  \[F=\chi(t) u,
  \]
we get
\[
(\partial_t-\Delta)F=vF+w\cdot\nabla F+\chi'u.
\]
Apply \eqref{EQ21} to $F=\chi u$ and $G=\chi'u$,
with a time interval $J$ containing the support of $\chi$ and
$\sup J<T_1$. The energy estimate \eqref{EQ07} controls the
first transition, and the heat extension controls the second.
Since $\phi$ is increasing, and $\chi$ has total variation~1
on each transition, \eqref{EQ21} at $t_*=t_2$ gives
\begin{align}
\begin{split}
 e^{\phi(s(t_2))}\|u(t_2)\|_{L^2_x}
 &\leq C e^{\phi(s(t_1+\delta))+CM^2\delta}
                     \|u(t_1)\|_{L^2_x}
		     \\&\indeq
 +C e^{\phi(\log4)}\|u(t_3)\|_{L^2_x}.
\end{split}\label{EQ22}
\end{align}
The time gap in the weight obeys
\begin{equation}
 s(t_2)-s(t_1+\delta)
 =\int_{t_1+\delta}^{t_2}\frac{\text{d}t}{T_1-t}
 \geq\frac{7(t_2-t_1)}{32(t_3-t_1)}\andand
 0\leq s(t_2)\leq\log(4/3).
 \label{EQ23}
\end{equation}
Choose
\begin{align}
\begin{split}
 \beta&=\beta_0+
 \frac{\log_+\left(
 2Ce^{CM^2\delta}\|u(t_1)\|_{L^2_x}/
                    \|u(t_2)\|_{L^2_x}\right)}
      {s(t_2)-s(t_1+\delta)}
      \end{split}
      \end{align}
 and
 \begin{align}
 \begin{split}
 \kappa=\max\{\kappa_0,\beta\}.
\end{split}
\llabel{EQ24}
\end{align}
By the definition of $\phi$,
\[
\begin{split}
&\phi(s(t_2))-\phi(s(t_1+\delta))
\\&\indeq
=
\beta\bigl(s(t_2)-s(t_1+\delta)\bigr)
+\frac{\kappa}{2}
\left(
(s(t_2)+2)^2-(s(t_1+\delta)+2)^2
\right)
\\&\indeq
\geq
\log_+\left(
2Ce^{CM^2\delta}
\frac{\|u(t_1)\|_{L^2_x}}
     {\|u(t_2)\|_{L^2_x}}
     \right)
\end{split}
\]
since $s(t_2)>s(t_1+\delta)$ and $\kappa>0$.
Therefore,
\[
\begin{split}
e^{\phi(s(t_1+\delta))-\phi(s(t_2))}
\leq
\exp\left[
-\log_+\left(
2Ce^{CM^2\delta}
\frac{\|u(t_1)\|_{L^2_x}}
     {\|u(t_2)\|_{L^2_x}}
\right)
\right].
\end{split}
\]
Using
\[
e^{-\log_+ r}=\min\{1,r^{-1}\}
    \comma r>0,
\]
we obtain
\[
Ce^{\phi(s(t_1+\delta))+CM^2\delta}
\|u(t_1)\|_{L^2_x}
\leq
\frac12e^{\phi(s(t_2))}
\|u(t_2)\|_{L^2_x}.
\]
Thus the first term on the right-hand side of \eqref{EQ22}
can be absorbed into the left-hand side.
Hence,
\begin{equation}
 \|u(t_3)\|_{L^2_x}\geq(2C)^{-1}\|u(t_2)\|_{L^2_x}
       e^{-(\phi(\log4)-\phi(s(t_2)))}.
 \label{EQ25}
\end{equation}
By \eqref{EQ23}, $M^2(t_3-t_1)\leq\eta$, and
$(t_3-t_1)/(t_2-t_1)\geq1$, we have
\begin{equation}
 \beta+\kappa+1
 \leq C\frac{t_3-t_1}{t_2-t_1}
      \left(1+\log_+\frac{\|u(t_1)\|_{L^2_x}}
                        {\|u(t_2)\|_{L^2_x}}\right).
 \label{EQ26}
\end{equation}
As $\phi(\log4)-\phi(s(t_2))\leq C(\beta+\kappa)$,
\eqref{EQ25} proves~\eqref{EQ05}.
\end{proof}

We now iterate the short-time estimate to obtain the global bound.

\begin{proof}[Proof of Theorem~\ref{T01}]
Let $H$ and $k$ be as defined in the statement of the theorem.
Set $h=(t_3-t_2)/k$ and $\tau_j=t_2+jh$ for
$-1\leq j\leq k$. Then
\[
0<h\leq H\leq\frac{t_2-t_1}{2}
\comma
t_1<\tau_{-1}<\tau_0=t_2
    ,\andand
\tau_k=t_3.
\]
By the energy estimate \eqref{EQ07},
\begin{align*}
\begin{split}
&
0<\|u(t_2)\|_{L^2_x}
 \leq e^{CM^2h}\|u(\tau_{-1})\|_{L^2_x},
\end{split}
\end{align*}
with
\begin{align*}
\begin{split}
\|u(\tau_{-1})\|_{L^2_x}
 \leq e^{CM^2(\tau_{-1}-t_1)}\|u(t_1)\|_{L^2_x}
\end{split}
\end{align*}
and
\begin{align}
\begin{split}
 \log_+\frac{\|u(\tau_{-1})\|_{L^2_x}}
                 {\|u(\tau_0)\|_{L^2_x}}
 &\leq
 \log_+\frac{\|u(t_1)\|_{L^2_x}}
                 {\|u(t_2)\|_{L^2_x}}
 +CM^2(t_2-t_1).
\end{split}\label{EQ27}
\end{align}
Every triple $(\tau_{j-1},\tau_j,\tau_{j+1})$, $0\leq j<k$,
has equal gaps $h$ and satisfies
\[
M^2(\tau_{j+1}-\tau_{j-1})=2M^2h\leq\eta.
\]
Proposition~\ref{P01} therefore shows, inductively,
that for $0\leq j<k$ we have
\begin{align}
\begin{split}
 1+\log_+\frac{\|u(\tau_j)\|_{L^2_x}}
                 {\|u(\tau_{j+1})\|_{L^2_x}}
 &\leq (1+C)\left(
 1+\log_+\frac{\|u(\tau_{j-1})\|_{L^2_x}}
                  {\|u(\tau_j)\|_{L^2_x}}
 \right).
\end{split}\label{EQ28}
\end{align}
Iterating \eqref{EQ28} and summing, after increasing $C>1$
if necessary, we obtain
\begin{align}
\begin{split}
 \log\frac{\|u(t_2)\|_{L^2_x}}{\|u(t_3)\|_{L^2_x}}
 &=\sum_{j=0}^{k-1}
   \log\frac{\|u(\tau_j)\|_{L^2_x}}
                 {\|u(\tau_{j+1})\|_{L^2_x}}             
    \leq\sum_{j=0}^{k-1}\left(1+
    \log_+\frac{\|u(\tau_j)\|_{L^2_x}}
                  {\|u(\tau_{j+1})\|_{L^2_x}}\right)\\
 &\leq C^{\,k+1}\left(
 1+\log_+\frac{\|u(\tau_{-1})\|_{L^2_x}}
                  {\|u(\tau_0)\|_{L^2_x}}
 \right).
\end{split}\label{EQ29}
\end{align}
By combining \eqref{EQ27} and \eqref{EQ29}, and increasing $C$
if necessary, we then obtain~\eqref{EQ04}.
\end{proof}

\section{Subcritical potentials without a smallness assumption}
\label{Sec03}
The following statement is our first main result for bounded drift $w$ and subcritical potential $v$.

\begin{Theorem}[Bounded drift and subcritical potential]\label{T200}
Let $p\in(n/2,\infty]$, and $I=[T_0,T_*]$ with $T_*\leq\infty$.
Suppose that
\[
u\in C(I;L^2(\R^n))\cap L^2_{\text{loc}}(I;H^1(\R^n))
\]
solves \eqref{EQ01} in the sense of distributions, where $u,v,w$ may be complex-valued and
\[
 N:=\|v\|_{L^\infty(I;L^p(\R^n))}<\infty
 \andand
 M:=\|w\|_{L^\infty(I\times\R^n)}<\infty.
\]
With the constant $\epsilon$ from Lemma~\ref{L01}, set
\begin{equation}
 R=\begin{cases}
 \epsilon^{-n/(2p-n)}N^{2p/(2p-n)},&p<\infty,\\
 N,&p=\infty.
 \end{cases}
 \label{EQ200}
\end{equation}
There are constants $\tilde{\eta}>0$ and $C'>1$, depending only on $n$,
such that the following holds. For finite $t_1<t_2<t_3$ in $I$ with
$\|u(t_1)\|_{L^2_x}\|u(t_2)\|_{L^2_x}>0$, let
\begin{equation}
 H=\begin{cases}
 (t_2-t_1)/2,&M^2+R=0,\\[2pt]
 \min\{(t_2-t_1)/2,\tilde{\eta}/(2(M^2+R))\},&M^2+R>0,
 \end{cases}
 \label{EQ201}
\end{equation}
and set $k=\left\lceil(t_3-t_2)/H\right\rceil$.
Then,
\begin{align}
\begin{split}
 \|u(t_3)\|_{L^2_x}
 &\geq \|u(t_2)\|_{L^2_x}
 \\&\!\!\times
 \exp\biggl(-(C')^{k+1}\left(
  1+\log_+\frac{\|u(t_1)\|_{L^2_x}}{\|u(t_2)\|_{L^2_x}}
  +(M^2+R)(t_2-t_1)\right)\biggr)>0.
\end{split}\label{EQ202}
\end{align}
\end{Theorem}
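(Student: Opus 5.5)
We reduce Theorem~\ref{T200} to the machinery of Section~\ref{Sec02} by a large-value/small-value splitting of the potential. For each $t$ write
\[
v=v\mathbf 1_{\{|v|>R\}}+v\mathbf 1_{\{|v|\leq R\}}=:v_1+v_2 .
\]
Then $\|v_2\|_{L^\infty_{t,x}}\leq R$. For $p<\infty$, Chebyshev gives $|\{|v|>R\}|\leq N^pR^{-p}$, so by H\"older
\[
\|v_1\|_{L^{n/2}}\leq\|v\|_{L^p}\,|\{|v|>R\}|^{2/n-1/p}\leq N\,(N/R)^{p(2/n-1/p)}.
\]
With the choice \eqref{EQ200} this is exactly $\epsilon$ (this is how the exponent $2p/(2p-n)$ arises). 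For $p=\infty$ we simply have $v_1=0$. So the critical part $v_1$ is small in $L^\infty_tL^{n/2}_x$, and the remaining zeroth-order term $v_2u$ is bounded by $R|u|$ pointwise.

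Next we redo the short-time estimate, Proposition~\ref{P01}, with $v_2u$ treated as an additional zeroth-order term. The energy estimate \eqref{EQ07} becomes $\|u(t)\|\leq e^{C(M^2+R)(t-s)}\|u(s)\|$, since $|\langle v_2u,u\rangle|\leq R\|u\|^2$. In the Carleman step, after the change of variables \eqref{EQ15}, the term becomes $\rho(s)v_2 g_z$ with $\rho\leq s_1=4(t_3-t_1)$. Two routes are available for it.
\begin{itemize}
\item \emph{Through the $f_\kappa$ slot.} Bound $\kappa^{-1/4}\|\rho v_2g\|_{L^2}\leq 4R(t_3-t_1)\kappa^{-1/2}\cdot\kappa^{1/4}\|g\|_{L^2}$, and absorb it into $X_\kappa(g)$ when $R(t_3-t_1)\leq\tilde\eta$ and $\kappa\geq1$.
\item \emph{Through the $L^1_sL^2_y$ slot.} Bound $\|\rho v_2 g\|_{L^1_sL^2_y}\leq C R(t_3-t_1)\|g\|_{L^\infty_sL^2_y}$, which is absorbed in the same way.
\end{itemize}
We also need the $\cH^{1/2}$ control of Lemma~\ref{L01}. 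In \eqref{EQ12} the new term contributes $CR(t_3-t_1)\|g\|_{L^2}^2\leq C\kappa\|g\|^2$, which is harmless. Thus Lemma~\ref{L01} holds verbatim, with an extra bounded potential of size at most $\tilde\eta$ after rescaling, provided $(M^2+R)(t_3-t_1)\leq\tilde\eta$ with $\tilde\eta\leq\eta$ small. The proof of Proposition~\ref{P01} then goes through unchanged, giving \eqref{EQ05} under the hypothesis $(M^2+R)(t_3-t_1)\leq\tilde\eta$. The only change is $CM^2\delta$ replaced by $C(M^2+R)\delta$ in \eqref{EQ22} and in the choice of $\beta$; this term is $O(1)$ and is absorbed into the constant.

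The iteration is exactly that of Theorem~\ref{T01}. Take $h=(t_3-t_2)/k\leq H$ with $H$ from \eqref{EQ201}, so that each triple $(\tau_{j-1},\tau_j,\tau_{j+1})$ satisfies $2(M^2+R)h\leq\tilde\eta$. The analogue of \eqref{EQ27} acquires the term $C(M^2+R)(t_2-t_1)$ from the modified energy estimate. The recursion \eqref{EQ28} and the summation \eqref{EQ29} then yield \eqref{EQ202} with some $C'$ depending only on $n$.

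The main obstacle is the step where the critical smallness and the bounded part must coexist. The threshold $\epsilon$ for $v_1$ must be the one from Lemma~\ref{L01}, independent of $R$, and the bounded remainder must be absorbed using only the smallness of the time interval, not of its size. One must check that its rescaled size $\rho R\lesssim R(t_3-t_1)$ stays uniformly small on the whole support $s\in[0,\log4]$ of $g_z$. It does, since $\rho\leq s_1$. Equally, the constants must not pick up dependence on $\beta,\kappa$; the $L^1_sL^2_y$ route above avoids any such dependence. This is also why the step size $H$ scales like $1/(M^2+R)$ and not like $1/M^2$ alone.
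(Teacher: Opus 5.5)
Your proposal is correct and follows the paper's proof essentially step for step. You use the same splitting $v=v\mathbf 1_{\{|v|>R\}}+v\mathbf 1_{\{|v|\le R\}}$ with the critical part of size exactly $\epsilon$, the energy estimate with $M^2+R$, absorption of the rescaled bounded part under $(M^2+R)(t_3-t_1)\le\tilde\eta$, and the unchanged iteration. The paper takes precisely your second route, applying Lemma~\ref{L01} directly with $G_z+V_{b,z}g_z$ as the $L^1_sL^2_y$ source, so your re-check of \eqref{EQ12} is only needed for the alternative $f_\kappa$ route.
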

When $M=0$, \cite[Theorem~2.1]{KX} gives a sharper
Gaussian-type lower bound for $p>n/2$.

\begin{proof}[Proof of Theorem~\ref{T200}]
If $N=0$, the result follows from Theorem~\ref{T01}, so assume $N>0$.
If $p=\infty$, set $v_c=0$ and $v_b=v$,
while if $p<\infty$, set
\[
 v_c=v\mathbf1_{\{|v|>R\}}
 \andand
 v_b=v-v_c.
\]
For finite $p$, the choice of $R$ gives, for almost every $t$,
\[
 \|v_c(t)\|_{L^{n/2}_x}^{n/2}
 \leq R^{n/2-p}\|v(t)\|_{L^p_x}^{p}
 \leq\epsilon^{n/2}.
\]
Also, $\|v_b\|_{L^\infty_{t,x}}\leq R$ by construction.
For $p=\infty$, these bounds follow from $v_c=0$, $v_b=v$,
and $R=N$. Thus, in both cases
\begin{equation}
 \|v_c(t)\|_{L^{n/2}_x}\leq\epsilon
 \andand
 \|v_b\|_{L^\infty_{t,x}}\leq R.
 \label{EQ203}
\end{equation}
Therefore, the energy estimate \eqref{EQ07}, with $v_c$ absorbed by
Sobolev inequality and $v_b$ treated as a bounded zeroth-order term, becomes
\begin{equation}
 \|u(t)\|_{L^2_x}
 \leq e^{C(M^2+R)(t-s)}\|u(s)\|_{L^2_x}
     \comma s\leq t.
 \label{EQ204}
\end{equation}
It remains only to modify the proof of Proposition~\ref{P01}.
In the similarity variables used there, writing
  \begin{align}
  \begin{split}
   & V_{c,z}(s,y)
 =\rho(s)v_c(t(s),z+\sqrt{\rho(s)}y),
 \\&
 V_{b,z}(s,y)
 =\rho(s)v_b(t(s),z+\sqrt{\rho(s)}y),
  \end{split}
   \label{EQ31}  
  \end{align}
the transformed equation becomes
\[
 (\partial_s+\cH-\Psi')g_z
 =
 V_{c,z}g_z
 +W_z\cdot(\nabla_y+y/4)g_z
 +V_{b,z}g_z+G_z.
\]
By the change of variables $x=z+\sqrt{\rho(s)}y$ and \eqref{EQ203},
we have
\[
 \|V_{c,z}(s)\|_{L^{n/2}_y}
 =
 \|v_c(t(s))\|_{L^{n/2}_x}
 \leq\epsilon.
\]
Moreover, since $\rho(s)\leq4(t_3-t_1)$ on $0\leq s\leq\log4$,
\[
 \|V_{b,z}\|_{L^\infty_{s,y}}
 \leq4R(t_3-t_1).
\]
Thus Lemma~\ref{L01}, with $G_z+V_{b,z}g_z$ as the source, gives
\[
\begin{aligned}
 X_\kappa(g_z)
 &\leq C\|G_z\|_{L^1_sL^2_y}
     +C\|V_{b,z}g_z\|_{L^1_sL^2_y}\\
 &\leq C\|G_z\|_{L^1_sL^2_y}
     +4C(\log4)R(t_3-t_1)
       \|g_z\|_{L^\infty_sL^2_y}\\
 &\leq C\|G_z\|_{L^1_sL^2_y}
     +4C(\log4)R(t_3-t_1)X_\kappa(g_z).
\end{aligned}
\]
Choose $0<\tilde{\eta}\leq\eta$ sufficiently small so that
$4C(\log4)\tilde{\eta}\leq1/2$. The last term is then absorbed whenever
\[
 (M^2+R)(t_3-t_1)\leq\tilde{\eta}.
\]
The remainder of the proof of Proposition~\ref{P01} is unchanged, except
that \eqref{EQ204} is used in place of~\eqref{EQ07}. Therefore,
\[
 \|u(t_3)\|_{L^2_x}
 \geq\|u(t_2)\|_{L^2_x}
 \exp\left(
 -C\frac{t_3-t_1}{t_2-t_1}
 \left(
 1+\log_+
 \frac{\|u(t_1)\|_{L^2_x}}{\|u(t_2)\|_{L^2_x}}
 \right)\right)
\]
when $(M^2+R)(t_3-t_1)\leq\tilde{\eta}$.

Finally, we repeat verbatim the iteration in the proof of
Theorem~\ref{T01}, with $M^2$ replaced by $M^2+R$ and with $H,k$ chosen
as in~\eqref{EQ201}, leading to~\eqref{EQ202}.
\end{proof}

\begin{Corollary}[Long-time lower bound from one nonzero time]\label{C01}
Let $I=[T_0,\infty)$, and assume that
\[
 u\in C(I;L^2(\R^n))\cap L^2_{\mathrm{loc}}(I;H^1(\R^n))
\]
solves \eqref{EQ01} with $\|u(T_0)\|_{L^2_x}>0$.
Suppose $w\in L^\infty(I\times\R^n)$ and assume either that
$\|v\|_{L^\infty(I;L^{n/2}(\R^n))}\leq\epsilon$ or 
$v\in L^\infty(I;L^p(\R^n))$ for some $n/2<p\leq\infty$.
Then there exist $c,C>0$, independent of $t$, such that
\[
 \|u(t)\|_{L^2_x}\geq c\exp\bigl(-C e^{C(t-T_0)}\bigr)
     \comma 
 t\geq T_0+1.
\]
\end{Corollary}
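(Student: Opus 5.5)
We will deduce the corollary from Theorem~\ref{T01} (critical case) or Theorem~\ref{T200} (subcritical case). The plan is to fix the two earlier times as $t_1=T_0$ and $t_2=T_0+1/2$, and let $t_3=t\geq T_0+1$ vary.

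\textbf{Step 1: nonvanishing at $t_2$.} The theorems need $\|u(t_2)\|_{L^2_x}>0$, but the corollary only assumes $\|u(T_0)\|_{L^2_x}>0$. We argue by contradiction. Suppose $\|u(\tau)\|_{L^2_x}=0$ for some $\tau>T_0$, and let $\tau_0$ be the infimum of such times. By continuity $\|u(\tau_0)\|_{L^2_x}=0$, and $\tau_0>T_0$. Pick $T_0<t_1'<t_2'<\tau_0$; both norms there are positive. Now apply Theorem~\ref{T01} (resp.\ Theorem~\ref{T200}) with $t_3=\tau_0$. Since $k$ is finite, the right side of \eqref{EQ04} (resp.\ \eqref{EQ202}) is strictly positive, which contradicts $\|u(\tau_0)\|_{L^2_x}=0$. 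Hence $\|u(\tau)\|_{L^2_x}>0$ for all $\tau\geq T_0$, and in particular at $t_1=T_0$ and $t_2=T_0+1/2$.

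\textbf{Step 2: apply the two-time bound.} With $t_1=T_0$ and $t_2=T_0+1/2$, the step size is $H=\min\{1/4,\eta/(2M^2)\}$ (resp.\ with $\tilde\eta$ and $M^2+R$). This is a fixed positive number depending only on $n$, $M$, and $N$ (through $R$). Then
\[
k=\lceil (t-T_0-1/2)/H\rceil\leq (t-T_0)/H+1 .
\]
Set
\[
A=1+\log_+\frac{\|u(T_0)\|_{L^2_x}}{\|u(T_0+1/2)\|_{L^2_x}}+\frac{M^2+R}{2},
\]
which is a finite constant independent of $t$. The theorem gives
\[
\|u(t)\|_{L^2_x}\geq \|u(T_0+1/2)\|_{L^2_x}\exp\bigl(-A\,C^{k+1}\bigr).
\]
Bounding the exponent,
\[
C^{k+1}\leq C^{2}\exp\bigl((\log C)(t-T_0)/H\bigr).
\]
Therefore
\[
\|u(t)\|_{L^2_x}\geq c\exp\bigl(-C_1e^{C_1(t-T_0)}\bigr)
\]
with $c=\|u(T_0+1/2)\|_{L^2_x}$ and $C_1=\max\{AC^2,\log C/H\}$.

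\textbf{Main obstacle.} The only delicate point is Step~1: the hypothesis is given at a single time, while the theorems require positivity at two times. The strict positivity built into \eqref{EQ04} and \eqref{EQ202} resolves this via the first-vanishing-time argument above. Everything else is direct bookkeeping of constants.
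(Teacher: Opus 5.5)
Your proof is correct and follows the paper's route: fix $t_1=T_0$ and a second time $t_2$ with $\|u(t_2)\|_{L^2_x}>0$, apply Theorem~\ref{T01} or Theorem~\ref{T200}, and turn $C^{k+1}$ with $k\leq (t-T_0)/H+1$ into a double exponential. The only difference is in Step~1: the paper chooses $t_2\in(T_0,T_0+1)$ close to $T_0$, so continuity alone gives $\|u(t_2)\|_{L^2_x}>0$, and your first-vanishing-time argument, while valid, is not needed.
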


\begin{proof}[Proof of Corollary~\ref{C01}]
By continuity, $\|u(t_2)\|_{L^2_x}>0$ for some
$t_2\in(T_0,T_0+1)$. Then apply Theorem~\ref{T01} in the critical
case or Theorem~\ref{T200} in the subcritical case with
$t_1=T_0$ and this fixed~$t_2$. The resulting two-time estimate
gives the stated double exponential lower bound.
\end{proof}

\section{Whole-space sharpness for the bounded endpoint}
\label{Sec04}
The following is our second main result which demonstrates
the sharpness of Theorem~\ref{T01}.

\begin{Theorem}[Whole-space sharpness in the critical-potential class]
\label{T03}
Let $\epsilon>0$ be arbitrary. There are real-valued
$w\in L^\infty([0,\infty)\times\R^n;\R^n)$ and
$v\in L^\infty([0,\infty);L^{n/2}(\R^n;\R))$, with
$\|v\|_{L^\infty_tL^{n/2}_x} \leq \epsilon$, and a real-valued
\[
u\in C([0,\infty);L^2(\R^n))
   \cap L^2_{\text{loc}}([0,\infty);H^1(\R^n))
\]
such that
\begin{equation}
 \partial_tu-\Delta u=w\cdot\nabla u+vu
 \label{EQ62}
\end{equation}
and, for some $c,C>0$ and all $t\geq0$,
\begin{equation}
 0<\|u(t)\|_{L^2_x}\leq C\exp(-c e^{ct}).
 \label{EQ63}
\end{equation}
The bound on $\|w\|_{L^\infty_{t,x}}$ is independent of~$\epsilon$.
\end{Theorem}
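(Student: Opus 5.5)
We build the solution from Rowan's torus example, localize it, and push all localization errors into the drift or a small potential. Rowan~\cite{Rowan} gives a bounded divergence-free velocity $b(t,x')$ on $\mathbb T^2$ and a nonzero mean-free solution $\theta(t,x')$ of $\partial_t\theta-\Delta'\theta=b\cdot\nabla'\theta$. Here $x'=(x_1,x_2)$, and $\theta$ satisfies $\|\theta(t)\|_{L^2(\mathbb T^2)}\leq C\exp(-ce^{ct})$. We view $\theta,b$ as periodic functions on $\R^2$. We will use the quantitative features of that construction: $\theta(t)$ is smooth for $t>0$ and has $\|\nabla'\theta\|_\infty,\|\theta\|_\infty\le C\exp(-ce^{ct})$ (after parabolic smoothing, up to adjusting constants), and it oscillates at frequency roughly $e^{ct}$.

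\textbf{The ansatz.} We take
\[
u(t,x)=\theta(t,x')\,a(x')\,\psi(t,x''),\qquad x''=(x_3,\dots,x_n).
\]
Here $a\in C_c^\infty(\R^2)$ is a fixed bump, and $\psi$ is a positive Gaussian-type transverse factor solving the heat equation on $\R^{n-2}$, say $\psi(t,x'')=(1+t)^{-(n-2)/2}e^{-|x''|^2/(4(1+t))}$. Since $\psi$ is a heat solution, the equation reduces to the error terms
\[
(\partial_t-\Delta)u-b\cdot\nabla u
=-\psi\bigl(2\nabla'\theta\cdot\nabla' a+\theta\Delta' a+\theta\,b\cdot\nabla' a\bigr).
\]

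\textbf{Absorbing the errors.}
\begin{itemize}
\item \emph{The $\theta$-terms.} These have the form $\theta\psi\,m(t,x')$ with $m$ bounded. We write them as $\tilde w\cdot\nabla u$ by pairing with a transverse derivative, using $\nabla''u=\theta a\nabla''\psi$ and $\nabla''\psi=-\tfrac{x''}{2(1+t)}\psi$. This pairing is singular where $x''=0$, the critical point of $\psi$.
\item \emph{Near the critical point.} On the region $|x''|\le r(t)$ we instead put the remainder into the potential, $v=-m/a$ restricted there.
\item \emph{The $\nabla'\theta$ term.} This term involves $\nabla' a$, so it is supported where $a$ may vanish. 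We handle it by choosing $a=e^{-1/(1-|x'|^2)}$-type profiles with $|\nabla' a|/a$ controlled.
\end{itemize}

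The cleaner alternative is to replace the bump $a$ by a product of positive non-compact factors. An example is $a(x')=\prod_{i=1,2}\cosh(x_i)^{-1}$, for which $|\nabla' a|\le a$ and $|\Delta' a|\le 2a$. Then $2\nabla'\theta\cdot\nabla'a=2\nabla'(\theta a)\cdot\tfrac{\nabla' a}{a}-2\theta\tfrac{|\nabla'a|^2}{a}$, so the gradient term becomes a bounded drift $2\nabla'a/a$. Only terms of the form $\theta a\psi\,m$ with $|m|\le C$ remain. This is the version I would pursue.

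\textbf{The main obstacle.} The obstacle is the zeroth-order remainder $m\,u$ with $m$ bounded but of size $O(1)$, which is not small in $L^{n/2}$. We split it as follows:
\begin{itemize}
\item \emph{Away from the origin, $|x''|\ge r$.} Write $m u=\tilde w\cdot\nabla''u$ with $\tilde w=-m\,\frac{2(1+t)x''}{|x''|^2}$. This is bounded by $C(1+t)/r$. To keep it bounded uniformly in time, we replace $\psi$ by a transverse factor with unit-scale Gaussian. The choice is $\psi(t,x'')=e^{\lambda t}\prod_{j\ge3}\operatorname{sech}(x_j)$, or more simply $\psi=e^{-|x''|^2}$, which is not a heat solution. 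Its heat error $(\partial_t-\Delta'')\psi=(2(n-2)-4|x''|^2)\psi$ is itself bounded times $u$ and is absorbed the same way, with $\tilde w\sim x''/|x''|^2$. Then $|\tilde w|\le C/r$ is independent of $t$.
\item \emph{Near the origin, $|x''|<r$.} Set $v=m\mathbf 1_{\{|x''|<r\}}$. Its $L^{n/2}$ norm over $\R^n$ is infinite in $x'$ unless $v$ decays there, so we also multiply by a cutoff in $x'$. On the complementary region $\{|x'|\ \text{large}\}$ we use a drift $\tilde w\sim \nabla' a/a\cdot$const built from $a$, exploiting $\nabla'a/a\to$ constant vectors at infinity. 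Choosing $r$ small and the $x'$-cutoff radius depending on $\epsilon$ makes $\|v\|_{L^\infty_tL^{n/2}}$, and any $L^p$ norm, below $\epsilon$. This gives the claimed smallness of the potential.
\end{itemize}
I expect the delicate step to be this bookkeeping, namely keeping the drift bound independent of $\epsilon$ while $r\to0$. The resolution should be to send only the $x''$-direction gradient into the drift on $|x''|\ge r$ at a fixed scale. Shrinking $r$ then costs a drift of size $C/r$, so instead $r$ is fixed, and smallness is obtained by using a transverse profile such as $\psi=\exp(-\Lambda|x''|^2)$ whose heat error is $O(\Lambda)$ and absorbable by a drift of size $O(1)$ away from a ball of radius $\sim\Lambda^{-1/2}$. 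The residual potential on that ball has $L^{n/2}$ norm small after optimizing $\Lambda$ against the $x'$-localization.

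\textbf{Checks.} Finally we verify that $u\in C_tL^2\cap L^2_tH^1$ by product structure, that $u\neq0$, and that
\[
\|u(t)\|_{L^2}\le C\|\theta(t)\|_{L^\infty}\le C\exp(-ce^{ct}).
\]
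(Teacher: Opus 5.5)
There is a genuine gap, and it lies in the $x'$-localization.

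With $a=\prod_i\operatorname{sech}(x_i)$, your own computation leaves the zeroth-order remainder $m=|\nabla'\log a|^2-\Delta'\log a-b\cdot\nabla'\log a=2+b\cdot(\tanh x_1,\tanh x_2)$. This does not decay as $|x'|\to\infty$. Far out in the quadrants $x_1,x_2\to\pm\infty$ it is close to $2\pm(b_1+b_2)$, and by periodicity of $b$ these two values cannot both be small. Hence $\int_{\R^2}|m|^{n/2+k}\,dx'=\infty$, with $k=n-2$.

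Such a remainder cannot be split into a small critical potential plus a bounded transverse drift. We have $\nabla''u=u\,\nabla''\log\psi$, and $|\nabla''\log\psi|\le C|x''|$ for every transverse profile you consider. So a pointwise splitting $m=v+\tilde w\cdot\nabla''\log\psi$ with $|\tilde w|\le D$ forces $|v|\ge|m|/2$ on $\{|x''|\le|m|/(2CD)\}$. This gives $\|v(t)\|_{L^{n/2}}^{n/2}\ge c\,D^{-k}\int_{\R^2}|m|^{n/2+k}\,dx'=\infty$.

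Your repair, absorbing the remainder at large $|x'|$ by an $x'$-drift, fails because $\nabla'u=u\,\nabla'\log a+a\psi\,\nabla'\theta$ is not a multiple of $u$. An $x'$-drift $\tilde w'$ generates the new error $a\psi\,\tilde w'\cdot\nabla'\theta$. This is not a bounded multiple of $u$: it does not vanish on the nodal set of $\theta$, and its relative size is the oscillation frequency of $\theta$, of order $e^{ct}$. Converting $mu$ into a drift term works only in the transverse variables, where $u$ is an explicit positive profile. So fixing $r$ and tuning $\Lambda$ leaves nothing to optimize against.

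In addition, $\psi=e^{-\Lambda|x''|^2}$ has heat error $(2k\Lambda-4\Lambda^2|x''|^2)\psi$, not $O(\Lambda)$. Absorbing it requires the unbounded drift $\approx 2\Lambda x''$.

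The missing idea is an $x'$-profile with \emph{polynomial} decay, which stays in $L^2(\R^2)$ while its logarithmic gradient tends to zero. In the paper's notation ($y=x'$, $z=x''$, $B$ Rowan's field), $h_L(y)=L^2/(L^2+|y|^2)$ satisfies $|\nabla\log h_L|\le 1/L$. The gradient part $4y/(L^2+|y|^2)$ goes into the drift. The remainder $E_L=(4+2B\cdot y)/(L^2+|y|^2)$ obeys $|E_L|\le(4+2M)(L^2+|y|^2)^{-1/2}$.

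The potential is placed on the $y$-dependent tube $\{|z|\le 2(L^2+|y|^2)^{-1/2}\}$, whose radius matches the size of $E_L$. Off the tube, $E_Lu$ is written as a transverse drift term with coefficient $E_L(1-\chi)\nabla_z\log q/|\nabla_z\log q|^2$. This coefficient is bounded by $C(1+M)$ uniformly in $L$. Meanwhile $\|v\|_{L^{n/2}}^{n/2}\le C(1+M)^{n/2}\int_{\R^2}(L^2+|y|^2)^{-(n/2+k)/2}\,dy\le C(1+M)^{n/2}L^{4-3n/2}\to0$, where $n\ge3$ ensures integrability.

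The transverse factor only needs $\nabla\log q$ bounded and $-kq-\Delta_zq=b\cdot\nabla_zq$ with $b$ bounded. An example is $q=e^{-\langle z\rangle}$ with the time factor $e^{-kt}$; your $e^{-kt}\prod_j\operatorname{sech}(x_j)$ would also do.

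Two smaller points:
\begin{itemize}
\item The statement requires $\|u(t)\|_{L^2}>0$ for every $t\ge0$. Your checks do not address this, and it needs a backward-uniqueness input; the paper applies Theorem~\ref{T01} to the constructed solution.
\item The $L^\infty$ bounds on $\theta$ and $\nabla'\theta$ are unnecessary. Periodicity and $\sup_{\bar y}\sum_{j\in\mathbb Z^2}a(\bar y+2\pi j)^2<\infty$ give the $L^2$ upper bound directly.
\end{itemize}
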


\begin{proof}[Proof of Theorem~\ref{T03}]
Write $\R^n=\R^2_y\times\R^k_z$, where $k=n-2\geq1$.
By \cite[Theorem~1.1]{Rowan}, there are a bounded real
divergence-free field $B(t,y)$ on $\mathbb T^2$ and a solution
$\theta$ with smooth nonzero initial data such that
\[
 \partial_t\theta-\Delta_y\theta=B\cdot\nabla_y\theta
 \andand
 \|\theta(t)\|_{L^2(\mathbb T^2)}
 \leq C_0e^{-c_0e^{c_0t}}\|\theta(0)\|_{L^2(\mathbb T^2)}.
\]
If the selected solution is complex, take a nonzero real or imaginary
part and adjust~$C_0$. Extend $B$ and $\theta$ periodically to
$\R^2$, and set $M=\|B\|_{L^\infty_{t,y}}$.

We first localize the periodic solution in~$y$. For $L\geq1$, set
\[
 h_L(y)=\frac{L^2}{L^2+|y|^2}.
\]
Then $h_L\in L^2(\R^2)$, and direct differentiation gives
\[
 \frac{\nabla_y h_L}{h_L}=-\frac{2y}{L^2+|y|^2}\andand
 \frac{\Delta_yh_L}{h_L}
 =-\frac4{L^2+|y|^2}
  +\frac{8|y|^2}{(L^2+|y|^2)^2}.
\]
The product rule, followed by
$\nabla_y(h_L\theta)=h_L\nabla_y\theta+\theta\nabla_yh_L$,
therefore yields
\begin{align}
\begin{split}
\label{EQ401}
 (\partial_t-\Delta_y)(h_L\theta)
 &=h_LB\cdot\nabla_y\theta
   -2\nabla_yh_L\cdot\nabla_y\theta
   -\theta\Delta_yh_L\\
 &=\left(B+\frac{4y}{L^2+|y|^2}\right)
       \cdot\nabla_y(h_L\theta)+E_L(t,y)h_L\theta,
\end{split}
\end{align}
where the localization error has the explicit form
\[
 E_L(t,y)=\frac{4+2B(t,y)\cdot y}{L^2+|y|^2}
 \andand
 |E_L(t,y)|\leq\frac{4+2M}{\sqrt{L^2+|y|^2}}.
\]
Next, we choose a decaying factor in the remaining $k$ directions:
\[
 q(z)=e^{-\langle z\rangle}
 ,
\]
where
$ \langle z\rangle=(1+|z|^2)^{1/2}$,
which belongs to $H^1(\R^k)$. We also have
\[
 \frac{\nabla_z q}{q}=\nabla_z\log q=-\frac z{\langle z\rangle}\andand
 \frac{\Delta_zq}{q}
 =-\frac{k}{\langle z\rangle}
   +\frac{|z|^2}{\langle z\rangle^3}
   +\frac{|z|^2}{\langle z\rangle^2}.
\]
At $z=0$, the gradient of $q$ vanishes and
$\Delta_zq(0)=-kq(0)$; this determines the time factor
$e^{-kt}$ below. Define the transverse drift
\[
 b(z)=\left(
   \frac{k}{1+\langle z\rangle}
  +\frac1{\langle z\rangle}
  +\frac1{\langle z\rangle^2}\right)z.
\]
The preceding identities and
$|z|^2=(\langle z\rangle-1)(\langle z\rangle+1)$ give
\begin{align}
\label{EQ402}
 -kq-\Delta_zq=b\cdot\nabla_zq\andand
 \|b\|_{L^\infty_z}\leq k+2.
\end{align}
By \eqref{EQ401} and \eqref{EQ402}, for
\[
 u(t,y,z)=e^{-kt}h_L(y)q(z)\theta(t,y),
\]
we have
\begin{align*}
 &
 \partial_t u
 =e^{-kt}q\,\partial_t(h_L\theta)
   -ke^{-kt}h_Lq\theta
   ,
  \\&
 \Delta_{y,z}u
 =e^{-kt}q\,\Delta_y(h_L\theta)
   +e^{-kt}h_L\theta\,\Delta_zq.
\end{align*}
Hence,
\begin{align*}
 (\partial_t-\Delta_{y,z})u
 &=
 e^{-kt}q(\partial_t-\Delta_y)(h_L\theta)
 +e^{-kt}h_L\theta(-kq-\Delta_zq)\\
 &=
 \left(B+\frac{4y}{L^2+|y|^2},\,b\right)
 \cdot\nabla_{y,z}u+E_Lu.
\end{align*}
To turn most of $E_Lu$ into a bounded drift, we must avoid
$z=0$, where $\nabla_zq=0$. Choose
$\chi\in C^\infty([0,\infty);[0,1])$ equal to one on $[0,1]$
and to zero on~$[2,\infty)$. Set
\[
\begin{aligned}
 &
 v(t,y,z)
  =E_L(t,y)\chi\left(\sqrt{L^2+|y|^2}\,|z|\right),\\&
 w_y(t,y,z)=B(t,y)+\frac{4y}{L^2+|y|^2},
\end{aligned}
\]
and, for $z\ne0$,
\[
\begin{aligned}
 w_z(t,y,z)
  &=b(z)+E_L(t,y)
      \left(1-\chi\bigl(\sqrt{L^2+|y|^2}\,|z|\bigr)\right)
      \frac{\nabla_z\log q(z)}{|\nabla_z\log q(z)|^2}.
\end{aligned}
\]
Set $w_z(t,y,0)=0$ and $w=(w_y,w_z)$. 
The factor $1-\chi$ makes the correction vanish near $z=0$, where
$\nabla_z\log q=0$.  Since
\[
 \nabla_z u=u\nabla_z\log q,
\]
we have
\begin{align*}
 (w_z-b)\cdot\nabla_z u
 &=
 E_L(t,y)
 \left(1-\chi\left(\sqrt{L^2+|y|^2}\,|z|\right)\right)u.
\end{align*}
On the other hand,
\[
 vu=
 E_L(t,y)
 \chi\left(\sqrt{L^2+|y|^2}\,|z|\right)u.
\]
Hence,
\[
 (w_z-b)\cdot\nabla_z u+vu=E_Lu,
\]
which proves~\eqref{EQ62}.
The drift is bounded uniformly in~$L$.  Indeed,
\[
 |w_y|
 \leq M+\frac{4|y|}{L^2+|y|^2}
 \leq M+\frac{2}{L}.
\]
Also, $\|b\|_{L^\infty_z}\leq k+2$.  Since $1-\chi(r)=0$ for
$r\leq1$ on the set where $w_z-b\neq0$, we have
$\sqrt{L^2+|y|^2}\,|z|>1$.
Hence,
\begin{align*}
 |w_z-b|
 &\leq
 |E_L|\frac{1}{|\nabla_z\log q|}
 =
 |E_L|\frac{\langle z\rangle}{|z|}\\
 &\leq
 C(1+M)
 \left(
 \frac{1}{\sqrt{L^2+|y|^2}}
 +
 \frac{1}{\sqrt{L^2+|y|^2}\,|z|}
 \right)
 \leq C(1+M).
\end{align*}
Therefore,
\[
 \|w\|_{L^\infty_{t,x}}\leq C(1+M),
\]
with a constant independent of~$L$.
On the other hand, as $k=n-2$,
the bound on $E_L$ gives
\begin{align*}
 \|v(t)\|_{L^{n/2}(\R^n)}^{n/2}
 &\leq C(1+M)^{n/2}
    \int_{\R^2}\int_{\{|z|\leq 2/\sqrt{L^2+|y|^2}\}}(L^2+|y|^2)^{-n/4}
    \,\text{d}z\,\text{d}y\\
 &\leq C(1+M)^{n/2}
    \int_{\R^2}(L^2+|y|^2)^{-(n/2+k)/2}\,\text{d}y\\
 &=\frac{2\pi C(1+M)^{n/2}}{n/2+k-2}
      L^{\,2-n/2-k}
 \leq C(1+M)^{n/2}L^{\,4-3n/2},
\end{align*}
uniformly in $t$.
Therefore,
\[
 \|v\|_{L^\infty_tL^{n/2}_x}
 \leq C(1+M)L^{\,8/n-3}\to0
  \asas L\to\infty.
\]
Choose $L$ sufficiently large that
$\|v\|_{L^\infty_tL^{n/2}_x}<\epsilon$
and the smallness hypothesis of Theorem~\ref{T01} is satisfied.

It remains to check the decay and nonvanishing of this same
whole-space solution. Periodicity yields the exact identity
\[
 \|u(t)\|_{L^2(\R^n)}^2
 =e^{-2kt}\|q\|_{L^2(\R^k)}^2
  \int_{\mathbb T^2}|\theta(t,\bar y)|^2
   \sum_{m\in\mathbb Z^2}
      h_L(\bar y+2\pi m)^2\,\text{d}\bar y.
\]
For each fixed $L$, the series
\[
 \sum_{m\in\mathbb Z^2}h_L(\bar y+2\pi m)^2
\]
converges uniformly on $\mathbb T^2$, since
$h_L(y)^2=O_L(|y|^{-4})$. Hence,
\[
 \sup_{\bar y\in\mathbb T^2}
 \sum_{m\in\mathbb Z^2}h_L(\bar y+2\pi m)^2
 \leq C_L
\]
for some $C_L<\infty$.  Therefore,
\[
 \|u(t)\|_{L^2(\mathbb R^n)}^2
 \leq
 C_L e^{-2kt}\|q\|_{L^2(\mathbb R^k)}^2
 \|\theta(t)\|_{L^2(\mathbb T^2)}^2,
\]
which, together with
\[
 \|\theta(t)\|_{L^2(\mathbb T^2)}
 \leq
 C_0e^{-c_0e^{c_0t}}
 \|\theta(0)\|_{L^2(\mathbb T^2)},
\]
gives the upper bound in~\eqref{EQ63}.

Since $\theta(0)\neq0$ and both $h_L$ and $q$ are strictly positive, $\|u(0)\|_{L^2(\mathbb R^n)}>0$.
By continuity, $\|u(t_2)\|_{L^2_x}>0$ for some $t_2>0$.
For $0\leq t<t_2$, \eqref{EQ07} gives $\|u(t)\|_{L^2_x}>0$;
for $t>t_2$, apply Theorem~\ref{T01} with times $(0,t_2,t)$.
Thus
\[
 \|u(t)\|_{L^2(\mathbb R^n)}>0
\]
for every finite $t\geq0$.
\end{proof}

\begin{Corollary}[Sharpness in every subcritical class]\label{C200}
For $p\in(n/2,\infty]$, and any $\epsilon>0$, the real-valued
coefficients and energy solution of Theorem~\ref{T03} can be chosen with
$\|v\|_{L^\infty_tL^p_x}<\epsilon$. 
The bound on
$\|w\|_{L^\infty_{t,x}}$ is independent of $\epsilon$, and
\eqref{EQ63} still holds.
\end{Corollary}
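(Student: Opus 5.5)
We reuse the construction in the proof of Theorem~\ref{T03} without change and only recompute the $L^p$ norm of the potential
\[
v(t,y,z)=E_L(t,y)\,\chi\bigl(\sqrt{L^2+|y|^2}\,|z|\bigr).
\]
The drift $w=(w_y,w_z)$, the solution $u=e^{-kt}h_Lq\theta$, and the identity \eqref{EQ62} do not depend on how $\epsilon$ is measured. We also already know that $\|w\|_{L^\infty_{t,x}}\leq C(1+M)$ uniformly in $L\geq1$. So it suffices to show that $\|v\|_{L^\infty_tL^p_x}\to0$ as $L\to\infty$, for each fixed $p\in(n/2,\infty]$. The upper bound in \eqref{EQ63} then follows exactly as before for each fixed $L$, since it uses only the periodic identity and Rowan's decay.

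\textbf{Case $p=\infty$.} From $|E_L|\leq(4+2M)/\sqrt{L^2+|y|^2}\leq(4+2M)/L$ we get $\|v\|_{L^\infty_{t,x}}\leq(4+2M)L^{-1}\to0$.

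\textbf{Case $n/2<p<\infty$.} We estimate as in the critical computation:
\[
\|v(t)\|_{L^p}^p\leq C(1+M)^p\int_{\R^2}(L^2+|y|^2)^{-p/2}\,\bigl|\{|z|\leq2(L^2+|y|^2)^{-1/2}\}\bigr|\,\text{d}y
\leq C(1+M)^p\int_{\R^2}(L^2+|y|^2)^{-(p+k)/2}\,\text{d}y .
\]
Here $k=n-2\geq1$. Since $p+k>n/2+1>2$, the last integral converges and equals $C\,L^{2-p-k}$. Thus
\[
\|v\|_{L^\infty_tL^p_x}\leq C(1+M)L^{(2-p-k)/p}.
\]
The exponent is negative because $p+k>2$, so the norm tends to zero.

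In both cases we choose $L$ large enough that $\|v\|_{L^\infty_tL^p_x}<\epsilon$. The drift bound is independent of $L$, hence of $\epsilon$.

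\textbf{Positivity of $\|u(t)\|_{L^2_x}$.} In Theorem~\ref{T03} this was obtained from Theorem~\ref{T01}, whose smallness is measured in $L^{n/2}$. The critical estimate from Theorem~\ref{T03} still holds: $\|v\|_{L^\infty_tL^{n/2}_x}\leq C(1+M)L^{8/n-3}$. So we may take $L$ large enough that both smallness conditions hold at once, and then argue exactly as in Theorem~\ref{T03}. Alternatively, Theorem~\ref{T200} applies directly, because $v\in L^\infty_tL^p_x$ and it needs no smallness; with $t_1=0$ and $t_2$ small, it gives $\|u(t)\|_{L^2_x}>0$ for all $t$.

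The only point needing care, and it is minor, is checking that the exponent in the $y$-integral gives convergence and a negative power of $L$ for every $p>n/2$. This holds because $p+k>2$ throughout.
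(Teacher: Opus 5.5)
Your proposal is correct and follows the paper's proof. You keep the Theorem~\ref{T03} construction unchanged and bound $\|v\|_{L^p}$ using the support $\{|z|\le 2/\sqrt{L^2+|y|^2}\}$ and the bound on $E_L$, which gives $\|v(t)\|_{L^p}^p\lesssim L^{4-n-p}$ for finite $p$ and $\|v\|_{L^\infty}=O(1/L)$ for $p=\infty$. Your explicit check of positivity (taking $L$ large enough that the critical smallness also holds, or invoking Theorem~\ref{T200}) just spells out what the paper means by ``preserving the nonvanishing solution.''
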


\begin{proof}[Proof of Corollary~\ref{C200}]
In the construction of Theorem~\ref{T03}, the support of $v(t,y,\cdot)$
lies in $\{|z|\leq2/\sqrt{L^2+|y|^2}\}$. If $p=\infty$, the
explicit expression for $E_L$ gives
\[
 \|v\|_{L^\infty_{t,x}}
 \leq\frac{4}{L^2}+\frac{M}{L}\to0
\asas L\to\infty
.
\]
For finite $p$, with $k=n-2$, we have
\begin{align}
\begin{split}
 \|v(t)\|_{L^p(\R^n)}^p
 &\leq C_{n,p}(1+M)^p
    \int_{\R^2}(L^2+|y|^2)^{-(p+k)/2}\,\text{d}y\\
 &=\frac{2\pi C_{n,p}(1+M)^p}{p+n-4}
       L^{\,4-n-p}.
\end{split}\label{EQ207}
\end{align}
Here, 
$p+n-4>0$. 
In either case, choosing $L$ sufficiently large
gives the stated small $L^p$ norm while preserving the drift bound
and the nonvanishing solution in Theorem~\ref{T03}.
\end{proof}

\colb
\section*{Acknowledgments} 
IK and QX were supported in part by the NSF grant DMS-2205493
and the Simons grant SFI-MPS-TSM-00014233.
The authors used ChatGPT for language editing. All mathematical content was written and independently verified by the authors.

\small
\medskip
\noindent
I.~Kukavica,
{Department of Mathematics, University of Southern California, Los Angeles, CA 90089}, e-mail: kukavica@usc.edu

\medskip
\noindent
Q.~Xu, {Department of Mathematics, University of Southern California, Los Angeles, CA 90089}, e-mail: xuqi@usc.edu

\end{document}